\newcolumntype{P}[1]{>{\centering\arraybackslash}p{#1}}
\newcolumntype{M}[1]{>{\centering\arraybackslash}m{#1}}
\newtheorem{theorem}{Theorem}
\newtheorem{corollary}[theorem]{Corollary}
\newtheorem{lemma}[theorem]{Lemma}
\newtheorem{remark}[theorem]{Remark}
\newtheorem{con}{Conjecture}
\theoremstyle{definition}
\newtheorem{definition}[theorem]{Definition}
\newcommand\Tstrut{\rule{0pt}{2.6ex}}       % top strut
\newcommand\Bstrut{\rule[-1.1ex]{0pt}{0pt}} % bottom strut
\newcommand\restr[2]{{% we make the whole thing an ordinary symbol
  \left.\kern-\nulldelimiterspace % automatically resize the bar with \right
  #1 % the function
  %% \vphantom{\big|} % pretend it's a little taller at normal size
  \right|_{#2} % this is the delimiter
  }}
\begin{document}

{\Large The classification of $2$-dimensional rigid algebras}
\thanks{The work is  supported by the PCI of the UCA `Teor\'\i a de Lie y Teor\'\i a de Espacios de Banach', by the PAI with project numbers FQM298, FQM7156 and by the project of the Spanish Ministerio de Educaci\'on y Ciencia  MTM2016-76327C31P, 
 RFBR 17-01-00258, FAPESP 17/15437-6. }

\subjclass[2010]{	17A30.}
\keywords{Rigid algebras, conservative algebras, classification problem.}
%\maketitle

   {\bf Antonio Jesús Calder\'on$^{a}$, Amir Fern\'andez Ouaridi$^{a}$, Ivan Kaygorodov$^{b}$ \\

    \medskip
}

{\tiny

$^{a}$ Universidad de C\'adiz. Puerto Real, C\'adiz, Espa\~na.

$^{b}$ CMCC, Universidade Federal do ABC. Santo Andr\'e, Brasil.

\smallskip

   E-mail addresses:

\smallskip   
    Antonio Jesús Calder\'on (ajesus.calderon@uca.es), 

 \smallskip  \smallskip   
    Amir Fern\'andez Ouaridi (amir.fernandezouaridi@alum.uca.es),
    
    Ivan Kaygorodov (kaygorodov.ivan@gmail.com).
}

\

\noindent {\bf Abstract.}
Using the algebraic classification of all $2$-dimensional algebras, 
we  give the algebraic classification of all $2$-dimensional rigid, conservative and terminal algebras
over an algebraically closed field of characteristic 0.
We have the geometric classification of the variety of $2$-dimensional terminal algebras,
and based on  the geometric classification of these algebras we formulate some open problems.

\

%{\bf Keywords.} rigid algebras, conservative algebras, algebraic classification, geometric classification.

\section*{Introduction}

\subsection{Conservative, terminal and rigid algebras}

 In 1972, Kantor introduced the notion a conservative algebra as a generalization of Jordan algebras \cite{Kantor72}. Unlike other classes of non-associative algebras, this class is not defined by a set of identities. Instead they are defined in the following way. 
 
Consider an algebra as a vector space $\mathbb V$ over a field {\bf k}, together with an
 element $\mu$ of $Hom(\mathbb V \otimes \mathbb V,  \mathbb V),$ so that $a \cdot b =\mu(a \otimes b).$ 
Given a linear map 
$\mathcal A:\mathbb V\rightarrow \mathbb V$ and a bilinear map
$\mathcal B:\mathbb V\times \mathbb V\to \mathbb V$, we define the product of a linear map and a bilinear map as the map $[\mathcal A,\mathcal B]:\mathbb V\times \mathbb V\to \mathbb V$ such that
$$[\mathcal A,\mathcal B](x,y)=\mathcal A(\mathcal B(x,y))-\mathcal B(\mathcal A(x),y)-\mathcal B(x,\mathcal A(y)), \mbox{ for all }x,y\in \mathbb V.$$
For an algebra $(\mathbb V, \mathcal P)$ with a multiplication $\mathcal P$ and $x\in \mathbb V$ we denote by $L_x^{\mathcal P}$ the operator of left multiplication by $x$. %If the multiplication $P$ is fixed, we write $L_x$ instead of $L_x^P$.
Thus, Kantor defines conservative algebras as follow.

\begin{definition}
An algebra $(\mathbb V, \mathcal P)$, where $\mathbb V$ is the vector space and $\mathcal P$ is the multiplication, is called a conservative algebra if there is a new multiplication $\mathcal F: \mathbb V\times \mathbb V\rightarrow \mathbb V$ such that 
\begin{equation}\label{uno}
[L_b^{\mathcal P},[ L_a^{\mathcal P}, {\mathcal P}]]=-[L_{{\mathcal F}(a,b)}^{\mathcal P},{\mathcal P}] 
\textrm{, for all $a, b \in \mathbb V$.}
\end{equation}
Simple calculations take us to the following identity with an additional multiplication $\mathcal F$, which must  hold for all $a, b, x, y\in \mathbb V$:
\begin{equation}\label{dos}
\begin{split}
b(a(xy)-(ax)y-x(ay)) - a((bx)y) + (a(bx))y+(bx)(ay) -a(x(by))+(ax)(by)+x(a(by))= \\
= - \mathcal F(a,b)(xy)+(\mathcal F(a,b)x)y + x(\mathcal F(a,b)y).
\end{split}
\end{equation}
\end{definition}

The class of conservative algebras is very vast \cite{KLP}. It includes:
 all associative algebras, 
 all quasi-associative algebras,
 all Jordan algebras, 
 all Lie algebras, 
 all (left) Leibniz algebras,
 all (left) Zinbiel algebras, 
 all terminal algebras
 and many other classes of algebras.

There are some properties of conservative algebras which are similar to wonderful properties of Lie algebras.
The conservative algebra $W(n)$ plays a similar role in the theory of conservative algebras as the Lie algebra of all $n\times n$ matrices $gl_n$ plays in the theory of Lie algebras. 
Namely, in~\cite{Kantor90}  Kantor considered the category $\mathcal{S}_n$ whose objects are conservative algebras of non-Jacobi dimension $n,$ and proven that the algebra $W(n)$ is the universal attracting object in this category, i.e., for every algebra $M$ of $\mathcal{S}_n$ there exists a canonical homomorphism from $M$ into the algebra $W(n)$. In particular, all Jordan algebras of dimension $n$ with unity are contained in the algebra $W(n)$. 
Some properties of the product in the algebra $W(n)$ were studied in \cite{KLP,KV15,Kg17}.
 
In 1989, Kantor introduced the class of terminal algebras which is  subclass of the class of conservative algebras \cite{Kantor89}. To introduce the notion of terminal algebra, we first define the product of two bilinear maps. 
Given two bilinear maps 
$\mathcal A:\mathbb V\times\mathbb V\rightarrow \mathbb V$ and 
$\mathcal B:\mathbb V\times \mathbb V\to \mathbb V$, we define the operation $[\mathcal A,\mathcal B]:\mathbb V\times\mathbb V\times \mathbb V\to \mathbb V$ such that for all $x,y, z\in \mathbb V$:
$$[\mathcal A,\mathcal B](x,y,z)=\mathcal A(\mathcal B(x,y),z)+\mathcal A(x, \mathcal B(y,z))+\mathcal A(y,\mathcal B(x,z))-\mathcal \mathcal B(\mathcal A(x,y),z)-\mathcal B(x,\mathcal A(y,z))-\mathcal B(y,\mathcal A(x,z)).$$

Also, for $a\in \mathbb V$ and a bilinear map $\mathcal A:\mathbb V\times\mathbb V\rightarrow \mathbb V$, we introduce the operation $[\mathcal A,a](x)=\mathcal A(a,x)$. Thus, we define:

\begin{definition}
An algebra $(\mathbb V, \mathcal P)$, where $\mathbb V$ is a  vector space and $\mathcal P$ is a multiplication, is called a terminal algebra if it satisfies, for any $a\in \mathbb V$:
\begin{equation}\label{tress}
[[[{\mathcal P},a],{\mathcal P}],{\mathcal P}]=0.
\end{equation}

\end{definition}

The following remark is obtained by straightforward calculations.

\begin{remark}
Any commutative algebra satisfying (\ref{tress}) is a Jordan algebra. 
\end{remark}

The class of terminal algebras includes all Jordan algebras, 
all Lie algebras,
all (left) Leibniz algebras and some other types of algebras.

The following characterization of terminal algebras, proved by Kantor \cite[Theorem 2]{Kantor89}, provides a description of this class as a subclass of the class of conservative algebras.

\begin{remark}
An algebra $(\mathbb V, \mathcal P)$ is terminal if and only if, for any  $a, b \in \mathbb V$:
\begin{equation}\label{}
[L_b^{\mathcal P},[ L_a^{\mathcal P}, {\mathcal P}]]=-[L_{2/3{\mathcal P}(a,b)+1/3{\mathcal P}(b,a)}^{\mathcal P},{\mathcal P}].
\end{equation}
\end{remark}

Then Kantor introduced a generalization of conservative algebras \cite{Kantor89trudy}:

\begin{definition}
An algebra $(\mathbb V, \mathcal P)$, where $\mathbb V$ is a vector space and $\mathcal P$ is a multiplication, is called a quasi-conservative algebra if there are a  multiplication $\mathcal F: \mathbb V\times \mathbb V\rightarrow \mathbb V$
and a  bilinear form $\phi: \mathbb V\times \mathbb V\rightarrow \bf k,$ such that 
\begin{equation}\label{uno}
[L_b^{\mathcal P},[ L_a^{\mathcal P}, {\mathcal P}]]=-[L_{{\mathcal F}(a,b)}^{\mathcal P},{\mathcal P}] +\phi(a,b) \mathcal P
\textrm{, for all $a, b \in \mathbb V$.}
\end{equation}
\end{definition}

Let us recall that the structural Lie algebra $Str(\mathbb V,\mu)$ is the subalgebra of the Lie algebra $End( \mathbb V,  \mathbb V)$-generated by all operators of left multiplication $\mu a(b) = \mu (a\otimes  b),$ 
$a \in  \mathbb V,$ and denote by $R( \mathbb V,\mu)$ the minimal
submodule of the $Str( \mathbb V,\mu)$-module $Hom( \mathbb V \otimes  \mathbb V,  \mathbb V),$ containing $\mu.$ 
Following Kac and Cantarini \cite{ck10}, 
we can give the following
\begin{definition}
An algebra  $(\mathbb V, \mu)$  where $\mathbb V$ is a vector space and $\mu$ is a multiplication, is called a rigid algebra if it satisfies:
\begin{equation}\label{un} R( \mathbb V, \mu) = Str( \mathbb V, \mu)\mu +  {\bf k} \mu.
\end{equation}
\end{definition}

Thus, (\ref{un}) means a certain rigidity property. Namely, 
in the case of Jordan algebras, this property means that “small” deformations of
the product by the structural group produce an isomorphic algebra.
The class of rigid algebras is very vast:
it includes all associative algebras, all Jordan algebras, all Lie algebras, all conservative algebras and many other types of algebras.

\begin{remark}
An algebra $\mathbb A$ is rigid if and only if $\mathbb A$ is quasi-conservative.
\end{remark}

\subsection{The classification of $2$-dimensional algebras}
The study of $2$-dimensional algebras has a very big history \cite{cfk18,GR11,kv17}.
To give the classification of $2$-dimensional algebras we have to introduce some notation used in the latest algebraic classification of $2$-dimensional algebras  \cite{kv17}.
Let us consider the action of the cyclic group $C_2=\langle \rho\mid \rho^2\rangle$ on ${\bf k}$ defined by the equality ${}^{\rho}\alpha=-\alpha$ for $\alpha\in{\bf k}$.
Now, fix some set of representatives of the orbits under this action and denote it by ${\bf k_{\geq 0}}$. For example, if ${\bf k}=\mathbb{C}$, then one can take $\mathbb{C}_{\geq 0}=\{\alpha\in\mathbb{C}\mid Re(\alpha)>0\}\cup\{\alpha\in\mathbb{C}\mid Re(\alpha)=0,Im(\alpha)\geq 0\}$.

Let us also consider the action of $C_2$ on ${\bf k}^2$ defined by the equality ${}^{\rho}(\alpha,\beta)=(1-\alpha+\beta,\beta)$ for $(\alpha,\beta)\in{\bf k}^2$.
Let us fix some set of representatives of the orbits under this action and denote it by $\mathcal{U}$. Let us also define $\mathfrak{T}=\{(\alpha,\beta)\in{\bf k}^2\mid \alpha+\beta=1\}$.

Given $(\alpha,\beta,\gamma,\delta)\in{\bf k}^4$, we define $\mathcal{D}(\alpha,\beta,\gamma,\delta)=(\alpha+\gamma)(\beta+\delta)-1$.
We define $\mathcal{C}_1(\alpha,\beta,\gamma,\delta)=(\beta,\delta)$, $\mathcal{C}_2(\alpha,\beta,\gamma,\delta)=(\gamma,\alpha)$, and $\mathcal{C}_3(\alpha,\beta,\gamma,\delta)=\left(\frac{\beta\gamma-(\alpha-1)(\delta-1)}{\mathcal{D}(\alpha,\beta,\gamma,\delta)},\frac{\alpha\delta-(\beta-1)(\gamma-1)}{\mathcal{D}(\alpha,\beta,\gamma,\delta)}\right)$ for $(\alpha,\beta,\gamma,\delta)$ such that $\mathcal{D}(\alpha,\beta,\gamma,\delta)\not=0$. Let us consider the set $\left\{\big(\mathcal{C}_1(\Gamma),\mathcal{C}_2(\Gamma),\mathcal{C}_3(\Gamma)\big)\mid \Gamma\in{\bf k}^4, \mathcal{D}(\Gamma)\not=0,\mathcal{C}_1(\Gamma),\mathcal{C}_2(\Gamma)\not\in \mathcal{T}\right\}\subset ({\bf k}^2)^3$.
One can show that the symmetric group $S_3$ acts on this set by the equality $${}^{\sigma}\big(\mathcal{C}_1(\Gamma),\mathcal{C}_2(\Gamma),\mathcal{C}_3(\Gamma)\big)=\big(\mathcal{C}_{\sigma^{-1}(1)}(\Gamma),\mathcal{C}_{\sigma^{-1}(2)}(\Gamma),\mathcal{C}_{\sigma^{-1}(3)}(\Gamma)\big) \mbox{ for }\sigma\in S_3.$$ Note that there exists a set of representatives of orbits under this action $\mathcal{\tilde V}$ such that if $(\mathcal{C}_1,\mathcal{C}_2,\mathcal{C}_3)\in \mathcal{\tilde V}$ and $\mathcal{C}_1\not=\mathcal{C}_2$, then $\mathcal{C}_3\not=\mathcal{C}_1,\mathcal{C}_2$. Let us fix such $\mathcal{\tilde V},$ and define
$$
\mathcal{V}=\{\Gamma\in{\bf k}^4\mid \mathcal{D}(\Gamma)\not=0; \mathcal{C}_1(\Gamma),\mathcal{C}_2(\Gamma)\not\in \mathfrak{T}, \big(\mathcal{C}_1(\Gamma),\mathcal{C}_2(\Gamma),\mathcal{C}_3(\Gamma)\big)\in\mathcal{\tilde V}\}.
$$
For $\Gamma\in\mathcal{V}$, we also define $\mathcal{C}(\Gamma)=\{\mathcal{C}_1(\Gamma),\mathcal{C}_2(\Gamma),\mathcal{C}_3(\Gamma)\}\subset{\bf k}^2$.

Let us consider the action of the cyclic group $C_2$ on ${\bf k}^*\setminus \{1\}$ defined by the equality ${}^{\rho}\alpha=\alpha^{-1}$ for $\alpha\in{\bf k}^*\setminus \{1\}$.
Let us fix some set of representatives of orbits under this action and denote it by ${\bf k_{>1}^*}$. For example, if ${\bf k}=\mathbb{C}$, then one can take $\mathbb{C}_{>1}^*=\{\alpha\in\mathbb{C}^*\mid |\alpha|>1\}\cup\{\alpha\in\mathbb{C}^*\mid |\alpha|=1,0<arg(\alpha)\le \pi\}$. For $(\alpha,\beta,\gamma)\in {\bf k}^2\times{\bf k}^*_{>1},$ we define $$\mathcal{C}(\alpha,\beta,\gamma)=\left\{\big(\alpha\gamma,(1-\alpha)\gamma\big),\left(\frac{\beta}{\gamma},\frac{1-\beta}{\gamma}\right)\right\}\subset{\bf k}^2.$$

%Let $\mathcal{F}\subset \mathcal{A}_2$ be the set formed by the algebra structures on $\mathbb V$ presented in Table 1.

Now, from \cite{kv17} we have the result that classifies all $2$-dimensional algebras over an algebraically closed field {\bf k}.

\begin{theorem} \label{alg}
 Any non-trivial $2$-dimensional ${\bf k}$-algebra can be represented by a unique structure from \hyperref[tab1]{Table 1} in the appendix.
\end{theorem}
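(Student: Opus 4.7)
The plan is to encode a $2$-dimensional algebra, after fixing a basis $\{e_1,e_2\}$, by the eight structure constants appearing in the products $e_ie_j$, regarded as a point of ${\bf k}^8$. On this affine space the group $\mathrm{GL}_2({\bf k})$ acts by change of basis, and the classification problem becomes that of exhibiting a canonical representative of each orbit. The role of the combinatorial sets $\mathcal{U}$, $\mathcal{V}$, ${\bf k_{\geq 0}}$ and ${\bf k_{>1}^*}$ in the statement is precisely to fix fundamental domains for the residual finite group actions that survive after an initial normalization of the basis.

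First I would split the analysis according to qualitative invariants of the algebra: whether there is an idempotent, a square-zero element, a one-sided identity, and so on. In each case one chooses an adapted basis---for example one whose first vector is an idempotent or a square-zero vector---which already kills several structure constants. After such partial normalization the stabilizer of the distinguished vector inside $\mathrm{GL}_2({\bf k})$ is a smaller subgroup, typically a product of a torus with a unipotent factor or a finite group, and its orbits on the remaining parameters determine the final moduli for that case.

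Next I would make these residual orbits explicit. A scaling ambiguity $e_i\mapsto\lambda e_i$ that survives normalization accounts for the passage from ${\bf k}$ to ${\bf k_{\geq 0}}$ via the sign involution; a symmetric swap of the two basis vectors accounts for the passage from ${\bf k}^*\setminus\{1\}$ to ${\bf k_{>1}^*}$ via $\alpha\mapsto\alpha^{-1}$; and a more subtle $S_3$-symmetry permuting three ``Peirce-type'' parameters of the algebra accounts for the construction of $\mathcal{\tilde V}$ and therefore of $\mathcal{V}$. The maps $\mathcal{C}_1,\mathcal{C}_2,\mathcal{C}_3$ and $\mathcal{D}$ are precisely the invariants tracking these three symmetric parameters together with the non-degeneracy condition under which they are genuinely distinguishable.

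Uniqueness is then established by showing that no two representatives in the table are isomorphic: any isomorphism must be an invertible linear map whose coefficients satisfy a polynomial system derived from the structure constants, and one verifies that this system forces the parameters to coincide modulo the prescribed finite action. The main obstacle will be the sheer combinatorial explosion of the case analysis and, within it, the careful bookkeeping needed to match degenerate limits---coincidences of parameters, zero denominators in $\mathcal{D}$, points of $\mathfrak{T}$, and so on---to the correct family in the table, so that every non-trivial $2$-dimensional ${\bf k}$-algebra is covered by exactly one entry.
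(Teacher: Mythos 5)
The first thing to note is that the paper does not actually prove this theorem: Theorem \ref{alg} is imported from the reference \cite{kv17}, and the elaborate definitions of ${\bf k_{\geq 0}}$, ${\bf k_{>1}^*}$, $\mathcal{U}$, $\mathfrak{T}$ and $\mathcal{V}$ preceding it are simply the notation needed to state that external result. So the only fair comparison is with the strategy of \cite{kv17}, and at that level your outline is accurate: one identifies the set of multiplications with $\mathrm{Hom}(\mathbb V\otimes\mathbb V,\mathbb V)\cong{\bf k}^8$, lets $\mathrm{GL}_2({\bf k})$ act by transport of structure, normalizes a basis using distinguished elements (idempotents, square-zero vectors --- in \cite{kv17} this is organized around the behaviour of the squaring map $x\mapsto x^2$), and then quotients the surviving parameters by the residual stabilizer, which is exactly what the sets ${\bf k_{\geq 0}}$ (a $C_2$ sign action), ${\bf k_{>1}^*}$ (the $C_2$ action $\alpha\mapsto\alpha^{-1}$), $\mathcal U$ and $\mathcal{\tilde V}$ (an $S_3$ action on the three pairs $\mathcal C_1,\mathcal C_2,\mathcal C_3$) are designed to encode.

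The genuine gap is that your proposal stops exactly where the proof begins. For a classification theorem the content \emph{is} the explicit list: one must actually carry out the case analysis, produce the sixteen families ${\bf A}_1$ through ${\bf E}_5$ of Table 1 with their precise parameter domains, show that every non-trivial algebra lands in exactly one family --- including all the boundary phenomena you defer, such as the locus $\mathcal D(\Gamma)=0$, the set $\mathfrak T$, and coincidences among $\mathcal C_1(\Gamma),\mathcal C_2(\Gamma),\mathcal C_3(\Gamma)$ --- and verify non-isomorphy of distinct representatives by solving, in each pair of cases, the polynomial system for an intertwining invertible matrix. None of this is executed; your final paragraph explicitly names the case analysis as a future ``obstacle'' rather than performing it, and no entry of Table 1 is ever derived or matched. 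As written, the proposal is a correct road map for the argument of \cite{kv17}, not a proof of the theorem.
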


In this paper, we consider algebraically closed field {\bf k} of characteristic zero. 

%\newpage
\section{The algebraic classification of $2$-dimensional rigid (quasi-conservative) and conservative algebras}

\subsection{The algebraic classification of $2$-dimensional rigid algebras}
The following result presents this classification.

\begin{theorem}
Let $\bf A$ be a $2$-dimensional rigid algebra over an algebraically closed field {\bf k} of characteristic zero,
then $\bf A$ is isomorphic to one of the non-isomorphic algebras presented in \hyperref[tab2]{Table 2}  in the appendix.
\end{theorem}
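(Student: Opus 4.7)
The plan is to reduce the statement to a finite linear test applied entry by entry to the classification of $2$-dimensional algebras given by Theorem~\ref{alg}. By the remark preceding the statement, rigidity is equivalent to quasi-conservativity, so a multiplication $\mathcal P$ on $\mathbb V$ is rigid if and only if there exist a bilinear multiplication $\mathcal F:\mathbb V\times \mathbb V\to\mathbb V$ and a bilinear form $\phi:\mathbb V\times\mathbb V\to{\bf k}$ such that
\[
[L_b^{\mathcal P},[L_a^{\mathcal P},\mathcal P]] = -[L_{\mathcal F(a,b)}^{\mathcal P},\mathcal P] + \phi(a,b)\,\mathcal P \qquad \text{for all } a,b\in \mathbb V.
\]

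First I would fix a basis $\{e_1,e_2\}$ of $\mathbb V$, write the multiplication $\mathcal P$ of a candidate algebra from Table 1 in terms of its four structure constants, and expand both sides of the above identity at $(a,b,x,y)=(e_i,e_j,e_k,e_\ell)$. Since the identity is linear in $\mathcal F(a,b)$ and $\phi(a,b)$ and bilinear in $(a,b)$, it suffices to impose it at the four pairs $(a,b)=(e_i,e_j)$; each such pair yields $8$ scalar equations in the $3$ unknowns $\mathcal F(e_i,e_j)_1,\mathcal F(e_i,e_j)_2,\phi(e_i,e_j)$. The algebra is then rigid if and only if each of these four $8\times 3$ linear systems is consistent, which is governed by the vanishing of certain minors of the augmented matrix; these minors are polynomials in the structure constants.

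Next I would run this linear test through every entry of Table 1, separating the parametric families (indexed by the various parameter sets ${\bf k}$, ${\bf k_{\geq 0}}$, $\mathcal U$, $\mathcal V$ and ${\bf k^*_{>1}}$) from the sporadic cases. In each case the test reduces to the vanishing of a small number of explicit polynomials in the parameters, yielding either the whole family, none of it, or a finite subset of exceptional parameter values. The resulting list of rigid algebras forms Table 2; non-isomorphism within this list is automatic, being inherited from the pairwise non-isomorphism already established in Theorem~\ref{alg}.

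The main obstacle is the sheer volume of the case analysis together with the need to treat correctly those parameter values at which the rank of the coefficient matrix of the linear system drops. Such degenerations are precisely where sporadic rigid algebras, lying inside an otherwise non-rigid family, can appear; overlooking them would produce an incomplete Table 2. Once these degenerate strata are enumerated, the remainder of the proof is routine linear algebra over ${\bf k}$.
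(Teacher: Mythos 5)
Your proposal is correct and follows essentially the same route as the paper: it reduces rigidity (via the equivalence with quasi-conservativity) to the solvability of the defining identity evaluated on basis tuples $(a,b,x,y)\in\{e_1,e_2\}^4$, i.e.\ to the consistency of small linear systems in the structure constants of $\mathcal F$ and $\phi$, checked case by case against Table~1, with non-isomorphism inherited from Theorem~\ref{alg}. Your observation that the $32$ scalar equations decouple into four independent $8\times 3$ systems indexed by the pair $(a,b)$ is a tidy way of organizing exactly the computation the paper carries out in Remark~\ref{remarkcases} and the subsequent case analysis.
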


\begin{remark}\label{remarkcases}
Let ${\bf A}=(\mathbb V, \mathcal P)$ be a 2-dimensional algebra, with $\{e_1, e_2\}$ a basis of ${\bf A}$. We can prove that $\bf A$ is rigid by  $\mathcal F$ and $\phi$  for the following cases:
    \begin{align*}
     \textrm{(1.a)} & \quad   a=b=x=y=e_1; & \textrm{(1.b)} & \quad   a=b=x=y=e_2;   \\
     \textrm{(2.a)} & \quad   a=x=y=e_1, b=e_2; & \textrm{(2.b)} & \quad   a=x=y=e_2, b=e_1;  \\
     \textrm{(3.a)} & \quad   b=x=y=e_1, a=e_2;& \textrm{(3.b)} & \quad   b=x=y=e_2, a=e_1;  \\
     \textrm{(4.a)} & \quad   a=b=y=e_1, x=e_2;& \textrm{(4.b)} & \quad   a=b=y=e_2, x=e_1;  \\
     \textrm{(5.a)} & \quad   a=b=x=e_1, y=e_2;& \textrm{(5.b)} & \quad   a=b=x=e_2, y=e_1;  \\
     \textrm{(6.a)} & \quad   x=y=e_1, a=b=e_2;& \textrm{(6.b)} & \quad   x=y=e_2, a=b=e_1;  \\
     \textrm{(7.a)} & \quad   a=x=e_1, b=y=e_2;& \textrm{(7.b)} & \quad   a=x=e_2, b=y=e_1;  \\
     \textrm{(8.a)} & \quad   b=x=e_1, a=y=e_2;& \textrm{(8.b)} & \quad   b=x=e_2, a=y=e_1.
  \end{align*}
\end{remark}

Let $\mathcal F:\mathbb V\times \mathbb V\rightarrow \mathbb V$  be a bilinear map:
$$\begin{array}{ll}
\mathcal F(e_1, e_1)=\lambda_1 e_1+\lambda_2 e_2, &  \mathcal F(e_1, e_2)= \mu_1 e_1+\mu_2 e_2, \\
\mathcal F(e_2, e_1)=\tau_1 e_1+\tau_2 e_2, & \mathcal F(e_2, e_2)=\nu _1 e_1+\nu _2 e_2.
\end{array}$$

Also, let $\phi:\mathbb V\times \mathbb V\rightarrow \textbf{k}$ be a bilinear form:
$$\begin{array}{ll}
\mathcal \phi(e_1, e_1)=\phi_{11} & \mathcal \phi(e_1,e_2)=\phi_{12}  \\
\mathcal \phi(e_2, e_1)=\phi_{21} & \mathcal \phi(e_2,e_2)=\phi_{22}.
\end{array}$$

Using the cases described above, we study each family from \hyperref[tab1]{Table 1}  in the appendix. This procedure lead us to a system of equations that can be solved without too much difficulty. Therefore, we just give the complete procedure for the first case, the other cases can be obtained in an analogous way.

\subsubsection{Algebra  $\bf A_1(\alpha)$, $\alpha\in{\bf k}$} 
From Remark \ref{remarkcases}, we obtain the following necessary conditions for the structural constants of $\mathcal F$ and $\phi$ (we have omitted trivial cases):
  \begin{flalign*}
     \textrm{(1.a)} & \quad \lambda_1+2\phi_{11}=1 \textrm{ and } (-2+\alpha)(2-\alpha-\lambda_{1})+2\phi_{11}=0,  & \textrm{(2.a)} & \quad \mu_1+\phi_{12}=0 \textrm{ and } (2-\alpha)\mu_1 + \phi_{12}=0, & \\
          \textrm{(3.a)} & \quad \tau_1+\phi_{21}=0 \textrm{ and } (2-\alpha)\tau_1 + \phi_{21}=0,  & \textrm{(4.a)} & \quad 1-\alpha=(1-\alpha)(\lambda_1+\phi_{11}), & \\
          \textrm{(4.b)} & \quad  \alpha(\nu_1+\phi_{22})=0, & \textrm{(5.a)} & \quad \alpha=\alpha(\lambda_1+\phi_{11}), & \\ 
          \textrm{(5.b)} & \quad  (1-\alpha)(\nu_1+\phi_{22})=0, & \textrm{(6.a)} & \quad \nu_1+\phi_{22}=0 \textrm{ and } (2-\alpha)\nu_1 + \phi_{22}=0, &\\
          \textrm{(7.a)} & \quad \alpha(\mu_1+\phi_{12})=0, & \textrm{(7.b)} & \quad (1-\alpha)(\tau_1+\phi_{21})=0, &\\
          \textrm{(8.a)} & \quad \alpha(\tau_1+\phi_{21})=0,   & \textrm{(8.b)} & \quad (1-\alpha)(\mu_1+\phi_{12})=0. &
  \end{flalign*}

Solving this system of equations, we conclude that $\bf A_1(\alpha)$ is rigid in the following cases: 

\begin{enumerate}
\item $\bf A_1(1)$, where $\mathcal F$ is given by:
$$\begin{array}{ll}
\mathcal F(e_1, e_1)= e_1+\lambda_2 e_2, &  \mathcal F(e_1, e_2)= -\phi_{12} e_1+\mu_2 e_2, \\
\mathcal F(e_2, e_1)=-\phi_{21} e_1+\tau_2 e_2, & \mathcal F(e_2, e_2)=-\phi_{22} e_1+\nu _2 e_2.
\end{array}$$
and $\phi$ is given by:
$$\begin{array}{ll}
\mathcal \phi(e_1, e_1)=0, & \mathcal \phi(e_1,e_2)=\phi_{12},  \\
\mathcal \phi(e_2, e_1)=\phi_{21}, & \mathcal \phi(e_2,e_2)=\phi_{22}.
\end{array}$$

\item $\bf A_1(2)$, where $\mathcal F$ is given by:  
$$\begin{array}{ll}
\mathcal F(e_1, e_1)= e_1+\lambda_2 e_2, &  \mathcal F(e_1, e_2)= \mu_2 e_2, \\
\mathcal F(e_2, e_1)=\tau_2 e_2, & \mathcal F(e_2, e_2)=\nu _2 e_2,
\end{array}$$
and $\phi=0$.
\end{enumerate}

\subsubsection{Algebra $\bf A_2$} The algebra $\bf A_2$ is rigid. The choice of $\mathcal  F$ is given by:   
  $$\begin{array}{ll}
    \mathcal F(e_1, e_1) =-e_1+\lambda_2 e_2,   &   \mathcal F(e_1, e_2) =\mu_2 e_2, \\
    \mathcal F(e_2, e_1) =\tau_2 e_2,           &   \mathcal F(e_2, e_2) =\nu _2 e_2,
\end{array}$$
and $\phi=0$.
\subsubsection{Algebra $\bf A_3$.}
The algebra $\bf A_3$ is a Leibniz algebra, and obviously, is rigid for any multiplication $\mathcal F$ and $\phi=0$.

\subsubsection{Algebra $\bf A_4(\alpha)$, $\alpha\in{\bf k_{\geq 0}}$} 
From the conditions (4.b), (5.b), (6.a) and (6.b), we conclude that $\bf A_4(\alpha)$ is not rigid for any $\alpha\in{\bf k_{\geq 0}}$.

\subsubsection{Algebra $\bf B_1(\alpha)$, $\alpha\in{\bf k}$}
The condition $\textrm{(6.a)}$ shows that $\bf B_1(\alpha)$ is not rigid for any $\alpha\in{\bf k}$.

\subsubsection{Algebra $\bf B_2(\alpha)$, $\alpha\in{\bf k}$}
The algebra $\bf B_2(\alpha)$ is rigid. 
If $\alpha=1$ then $\mathcal F$ is given by an arbitrary map, 
otherwise $\mathcal F$ is given by:  
$$\begin{array}{ll}
\mathcal F(e_1, e_1) =\lambda_2 e_2,  &\mathcal F(e_1, e_2) =-\alpha e_1 + \mu_2 e_2, \\
\mathcal F(e_2, e_1) =\tau_2 e_2,     &\mathcal F(e_2, e_2) =\nu _2 e_2 
\end{array}$$
and $\phi=0$.     
\subsubsection{Algebra $\bf B_3$.}
The algebra $\bf B_3$ is a Lie algebra, and obviously, is rigid for any multiplication $\mathcal F$ and $\phi=0$.

\subsubsection{Algebra  $\bf C(\alpha, \beta)$, $(\alpha, \beta) \in{\bf k}\times {\bf k_{\geq 0}}$}      
The algebra $\bf C(\alpha, \beta)$ is rigid if and only if $(\alpha, \beta)=(1, 0)$, for any $\phi,$ and for $\mathcal F$ given by:  
$$\begin{array}{ll}
\mathcal F(e_1, e_1)=-\phi_{11} e_2, &  \mathcal F(e_1, e_2)= e_1-\phi_{12} e_2, \\
\mathcal F(e_2, e_1)= e_1-\phi_{21} e_2, & \mathcal F(e_2, e_2)=(1-\phi_{22}) e_2.
\end{array}$$

\subsubsection{Algebra $\bf D_1(\alpha, \beta)$, $(\alpha, \beta)\in \mathcal{U}$}

The algebra $\bf D_1(\alpha, \beta)$ is rigid in the following cases:   

\begin{enumerate}
\item $\bf D_1(0, 0).$ $\phi=0$ and $\mathcal F$ is given by:  
$$\begin{array}{ll}
\mathcal F(e_1, e_1)= e_1+\lambda_2 e_2, &  \mathcal F(e_1, e_2)= \mu_2 e_2, \\
\mathcal F(e_2, e_1)=\tau_2 e_2, & \mathcal F(e_2, e_2)=\nu _2 e_2.
\end{array}$$

\item $\bf D_1(1/2, 0).$ $\mathcal F$ is given by:  
$$\begin{array}{ll}
\mathcal F(e_1, e_1)=0, &  \mathcal F(e_1, e_2)= -\frac{1}{2} e_1+ e_2, \\
\mathcal F(e_2, e_1)=0, & \mathcal F(e_2, e_2)=\frac{1}{2} e_2,
\end{array}$$
and $\phi$ is given by:  
$$\begin{array}{ll}
\mathcal \phi(e_1, e_1)=1, & \mathcal \phi(e_1,e_2)=\frac{1}{2},  \\
\mathcal \phi(e_2, e_1)=\frac{1}{2}, & \mathcal \phi(e_2,e_2)=0.
\end{array}$$

\item $\bf{D}_1(1, 1)$. $\phi$ is arbitrary and $\mathcal F$ is given by:  
$$\begin{array}{ll}
\mathcal F(e_1, e_1)=(1-\phi_{11}) e_1, &  \mathcal F(e_1, e_2)= -\phi_{11} e_1+ e_2, \\
\mathcal F(e_2, e_1)= -\phi_{21} e_1+ e_2, & \mathcal F(e_2, e_2)= -\phi_{22} e_1+ e_2.
\end{array}$$
\end{enumerate}

\subsubsection{Algebra      $\bf D_2(\alpha, \beta)$, $(\alpha,\beta)\in{\bf k}^2\setminus \mathcal{T}$}
The algebra $\bf D_2(\alpha, \beta)$ is rigid for any $(\alpha,\beta)\in{\bf k}^2\setminus \mathcal{T}$. 
$\phi$ and $\mathcal F$ are given by:
 $$\begin{array}{ll}
    \mathcal F(e_1, e_1) =(1-\phi_{11})e_1 + \lambda_2 e_2,  &\mathcal F(e_1, e_2) =-\phi_{12}e_1+ \mu_2 e_2, \\
    \mathcal F(e_2, e_1) =-\phi_{21}e_1 +\tau_2 e_2,           &\mathcal F(e_2, e_2) =-\phi_{22} e_1+\nu _2 e_2, 
  \end{array}$$
where $\beta\lambda_2=0$, $\beta=\beta\mu_2$, $(2-\alpha)\beta=\beta \tau_2$ and $\beta\nu_2=0.$
Now we have next cases:
\begin{itemize}
 \item If $\beta=0$ then  $\phi$ is  arbitrary  and $\mathcal{F}$ is given by:  
 $$ \begin{array}{ll}
    \mathcal F(e_1, e_1) =(1-\phi_{11})e_1 + \lambda_2 e_2,  &\mathcal F(e_1, e_2) =-\phi_{12}e_1+ \mu_2 e_2, \\
    \mathcal F(e_2, e_1) =-\phi_{21}e_1 +\tau_2 e_2,           &\mathcal F(e_2, e_2) =-\phi_{22} e_1+\nu _2 e_2. 
  \end{array}$$
 \item If $\beta\neq0$ then $\phi$ is  arbitrary  and $\mathcal{F}$ is given by:  
  $$\begin{array}{ll}
    \mathcal F(e_1, e_1) =(1-\phi_{11})e_1,  &\mathcal F(e_1, e_2) =-\phi_{12}e_1+e_2, \\
    \mathcal F(e_2, e_1) =-\phi_{21}e_1 +(2-\alpha) e_2,           &\mathcal F(e_2, e_2) =-\phi_{22} e_1. 
  \end{array}$$
\end{itemize}

\subsubsection{Algebra     $\bf D_3(\alpha, \beta)$, $(\alpha,\beta)\in{\bf k}^2\setminus \mathcal{T}$}

From condition $\textrm{(6.b)}$, we conclude that $\bf D_3(\alpha, \beta)$ is not rigid.

\subsubsection{Algebra $\bf E_1(\alpha, \beta, \gamma, \delta)$, $(\alpha,\beta,\gamma,\delta)\in \mathcal{V}$} 

The algebra $\bf E_1(\alpha, \beta, \gamma, \delta)$ is rigid in the following cases:

\begin{enumerate}
    \item $\bf E_1(\delta',1+\delta', 1+\delta', \delta'), (\delta',1+\delta', 1+\delta', \delta')\in \mathcal{V}$. 
    $\mathcal F$ is given by:  
    $$\begin{array}{ll}
\mathcal F(e_1, e_1)=(2+\delta) e_1, &  \mathcal F(e_1, e_2)= (1-\delta) e_1+ e_2, \\
\mathcal F(e_2, e_1)= e_1+(1-\delta) e_2, & \mathcal F(e_2, e_2)=(2+\delta) e_2.
\end{array}$$
and $\phi$ is given by:  
$$\begin{array}{ll}
\mathcal \phi(e_1, e_1)=-1-\delta, & \mathcal \phi(e_1,e_2)=-1+\delta+2\delta^2,  \\
\mathcal \phi(e_2, e_1)=-1+\delta+2\delta^2, & \mathcal \phi(e_2,e_2)=-1-\delta.
\end{array}$$

    \item $\bf E_1 (0, \beta', \gamma', 0), (0, \beta', \gamma', 0)\in \mathcal{V}$. $\phi$ is arbitrary and  $\mathcal F$ is given by:   $$\begin{array}{ll}
\mathcal F(e_1, e_1)=\frac{\beta^2 \gamma + \phi_{11}-\gamma\phi_{11}-1}{-1+\beta\gamma}e_1+\frac{(1-\beta)(\beta+\phi_{11})}{-1+\beta\gamma} e_2, &  \mathcal F(e_1, e_2)= \frac{\gamma(\beta-\phi_{12}-1)+\phi_{12}}{-1+\beta\gamma} e_1+\frac{\beta(\gamma-\phi_{12}-1)+\phi_{12}}{-1+\beta\gamma} e_2, \\
\mathcal F(e_2, e_1)=\frac{\gamma(\beta-\phi_{21}-1)+\phi_{21}}{-1+\beta\gamma} e_1+\frac{\beta(\gamma-\phi_{21}-1)+\phi_{21}}{-1+\beta\gamma} e_2, & \mathcal F(e_2, e_2)=\frac{(1-\gamma)(\gamma+\phi_{22})}{-1+\beta\gamma} e_1+\frac{\beta(\gamma^2-\phi_{22})-1+\phi_{22}}{-1+\beta\gamma} e_2.
\end{array}$$
\end{enumerate}

\subsubsection{Algebra  $\bf E_2(\alpha, \beta, \gamma)$,  $(\alpha,\beta,\gamma)\in {\bf k}^3\setminus {\bf k}\times\mathcal{T}$}

The algebra $\bf E_2(\alpha, \beta, \gamma)$ is rigid for the following parameters:

\begin{enumerate}
\item $\bf E_2(1, 1, \gamma)$, $(1, 1, \gamma)\in {\bf k}^3\setminus {\bf k}\times\mathcal{T}$. 
$\phi$ is arbitrary and $\mathcal F$ is given by:  
$$\begin{array}{ll}
\mathcal F(e_1, e_1)= (1-\phi_{11}) e_1, &  \mathcal F(e_1, e_2)= -\phi_{12} e_1+ e_2, \\
\mathcal F(e_2, e_1)=-\phi_{21} e_1+ e_2, & \mathcal F(e_2, e_2)= (-1-\phi_{22}) e_1+ 2 e_2.
\end{array}$$

\item $\bf E_2(1, \beta, 0)$,  $(1, \beta, 0)\in {\bf k}^3\setminus {\bf k}\times\mathfrak{T}$.  
$\phi$ is arbitrary and $\mathcal F$ is given by:  
$$\begin{array}{ll}
\mathcal F(e_1, e_1)=(1+\beta) e_1+(-\beta-\phi_{11}) e_2, &  \mathcal F(e_1, e_2)=  e_1-\phi_{12} e_2, \\
\mathcal F(e_2, e_1)= e_1-\phi_{21} e_2, & \mathcal F(e_2, e_2)=(1-\phi_{22}) e_2.
\end{array}$$

\end{enumerate}

\subsubsection{Algebra  $\bf E_3(\alpha, \beta, \gamma)$, $(\alpha,\beta,\gamma)\in {\bf k}^2\times{\bf k}^*_{>1}$}
The algebra $\bf E_3(\alpha, \beta, \gamma)$ is rigid for the following parameters: 

\begin{enumerate}
\item $\bf E_3(1, \gamma, \gamma)$, $\gamma\in{\bf k}^*_{>1}$. 
$\phi$ is arbitrary and $\mathcal F$ is given by:  
$$\begin{array}{ll}
\mathcal F(e_1, e_1)=(1-\phi_{11}) e_1, &  \mathcal F(e_1, e_2)= -\phi_{12} e_1+ e_2, \\
\mathcal F(e_2, e_1)=-\phi_{21} e_1+ e_2, & \mathcal F(e_2, e_2)=(-\gamma-\phi_{22}) e_1+(1+\gamma) e_2.
\end{array}$$

\item $\bf E_3(\dfrac{1}{\gamma}, 1, \gamma)$, $\gamma\in{\bf k}^*_{>1}$. 
$\phi$ is arbitrary and $\mathcal F$ is given by:  
$$\begin{array}{ll}
\mathcal F(e_1, e_1)=\dfrac{1+\gamma}{\gamma} e_1-\dfrac{1+\gamma\phi_{11}}{\gamma} e_2, &  \mathcal F(e_1, e_2)=  e_1 - \phi_{12} e_2, \\
\mathcal F(e_2, e_1)=e_1-\phi_{21} e_2, & \mathcal F(e_2, e_2)=(1-\phi_{22}) e_2.
\end{array}$$

\item  $\bf E_3(1, 1, \gamma)$, $\gamma\in{\bf k}^*_{>1}$. 
$\mathcal F$ is given by:  
$$\begin{array}{ll}
\mathcal F(e_1, e_1)=\dfrac{1+\gamma-\gamma^2\lambda_2}{\gamma} e_1+\lambda_2 e_2, &  \mathcal F(e_1, e_2)= (1+\gamma-\gamma\mu_2) e_1+\mu_2 e_2, \\
\mathcal F(e_2, e_1)=(1+\gamma-\gamma\tau_2) e_1+\tau_2 e_2, & \mathcal F(e_2, e_2)=\gamma(1+\gamma-\nu_2) e_1+\nu _2 e_2,
\end{array}$$
and $\phi$ is given by:  
$$\begin{array}{ll}
\mathcal \phi(e_1, e_1)=-\dfrac{1}{\gamma}, & \mathcal \phi(e_1,e_2)=-1,  \\
\mathcal \phi(e_2, e_1)=-1, & \mathcal \phi(e_2,e_2)=-\gamma.
\end{array}$$

\item  $\bf E_3(0, 0, -1)$. 
$\phi=0$ and $\mathcal F$ is given by:  

$$\begin{array}{ll}
\mathcal F(e_1, e_1)=e_1, &  \mathcal F(e_1, e_2)= 2 e_1+ e_2, \\
\mathcal F(e_2, e_1)= e_1+ 2 e_2, & \mathcal F(e_2, e_2)=e_2.
\end{array}$$

\end{enumerate}

\subsubsection{Algebra $\bf E_4$.}
The algebra $\bf E_4$ is rigid for $\mathcal F$ is given by:  
$$\begin{array}{ll}
\mathcal F(e_1, e_1)=2 e_1, &  \mathcal F(e_1, e_2)=  e_1+ e_2, \\
\mathcal F(e_2, e_1)= e_2, & \mathcal F(e_2, e_2)= e_2,
\end{array}$$
and $\phi$ is given by:  
$$\begin{array}{ll}
\mathcal \phi(e_1, e_1)=-1, & \mathcal \phi(e_1,e_2)=-1,  \\
\mathcal \phi(e_2, e_1)=0, & \mathcal \phi(e_2,e_2)=0.
\end{array}$$

\subsubsection{Algebra $\bf E_5(\alpha)$, $\alpha\in{\bf k}$}
The algebra $\bf E_5(\alpha)$ is rigid for any $\alpha\in{\bf k}$. 
$\phi$ is arbitrary  and $\mathcal F$ is given by:  
  $$\begin{array}{ll}
    \mathcal F(e_1, e_1) =\lambda_1 e_1 + (1-\phi_{11}-\lambda_1 ) e_2,   &\mathcal F(e_1, e_2) = \mu_1 e_1 + (1-\phi_{12}-\mu_1) e_2, \\
    \mathcal F(e_2, e_1) = \tau_1 e_1 + (1-\phi_{21}-\tau_1) e_2,         &\mathcal F(e_2, e_2) = \nu_1 e_1 + (1-\phi_{22}-\nu_1) e_2. 
  \end{array}$$

\subsection{The algebraic classification of $2$-dimensional conservative algebras}
As a corollary of the classification of $2$-dimensional rigid algebras, we have the following result.

\begin{theorem}
Let $\bf A$ be a $2$-dimensional conservative algebra  over an algebraically closed field {\bf k} of characteristic zero, then $\bf A$ is isomorphic to one of the non-isomorphic algebras presented in \hyperref[tab3]{Table 3}  in the appendix. %with an additional multiplication $\mathcal F$
\end{theorem}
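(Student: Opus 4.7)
The plan is to derive the corollary directly from Theorem 2 (the classification of $2$-dimensional rigid algebras displayed in Table 2) together with the elementary observation that an algebra is conservative precisely when it is quasi-conservative with the bilinear form $\phi$ identically zero (compare the definitions of conservative and quasi-conservative algebra, and the equivalence stated in the remark between rigid and quasi-conservative). Consequently, the $2$-dimensional conservative algebras are exactly the subset of the rigid algebras from Table 2 for which the parametric family of admissible pairs $(\mathcal{F},\phi)$ contains an element with $\phi=0$. The proof therefore reduces to scanning the case-by-case data produced in the proof of Theorem 2 and retaining precisely those families in which the value $\phi=0$ is compatible with a valid choice of $\mathcal{F}$.

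First, I would sort the entries of Table 2 into three groups according to how $\phi$ enters the description. Group I consists of the families whose rigidity witness already has $\phi=0$ prescribed, namely $\mathbf{A}_1(2)$, $\mathbf{A}_2$, $\mathbf{A}_3$, $\mathbf{B}_2(\alpha)$, $\mathbf{B}_3$, $\mathbf{D}_1(0,0)$ and $\mathbf{E}_3(0,0,-1)$; these are automatically conservative. Group II consists of the families where $\phi$ was shown to be arbitrary, namely $\mathbf{A}_1(1)$, $\mathbf{C}(1,0)$, $\mathbf{D}_1(1,1)$, $\mathbf{D}_2(\alpha,\beta)$, $\mathbf{E}_1(0,\beta',\gamma',0)$, $\mathbf{E}_2(1,1,\gamma)$, $\mathbf{E}_2(1,\beta,0)$, $\mathbf{E}_3(1,\gamma,\gamma)$, $\mathbf{E}_3(\tfrac{1}{\gamma},1,\gamma)$ and $\mathbf{E}_5(\alpha)$; for each of these I would just set $\phi=0$ in the displayed formula for $\mathcal{F}$ and verify directly that the resulting multiplication satisfies the conservative identity (\ref{dos}), which amounts to reading the computations of Theorem 2 with the $\phi_{ij}$ put to zero.

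Group III consists of the families in which the rigidity analysis forced $\phi$ to take specific nonzero values: $\mathbf{D}_1(1/2,0)$, $\mathbf{E}_1(\delta',1+\delta',1+\delta',\delta')$, $\mathbf{E}_3(1,1,\gamma)$ and $\mathbf{E}_4$. For these I would return to the system of necessary conditions analogous to those displayed for $\mathbf{A}_1(\alpha)$ in Remark 3 and impose $\phi=0$ from the outset; in each case the very equations that pinned down the explicit values of the $\phi_{ij}$ in the rigid analysis become inconsistent, showing that the family admits no conservative structure and must be dropped from Table 3. The final step is to collect the surviving families from Groups I and II into the table and confirm, using the isomorphism data already compiled for Theorem 2, that no two of them are isomorphic, so that Table 3 is indeed a list of pairwise non-isomorphic representatives.

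The only step requiring genuine care is the negative part of the argument for Group III: one must ensure that the exclusion is not an artifact of the particular $\mathcal{F}$ chosen in the rigid classification but is intrinsic. However, since the proof of Theorem 2 described the \emph{full} solution set of the rigidity equations for each family (not merely one choice), the incompatibility of $\phi=0$ with the constraints can be read off immediately from the systems already written down, and no additional case analysis is needed. With this, the corollary follows.
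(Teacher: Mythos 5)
Your proposal is correct and follows essentially the same route as the paper, whose entire proof is the instruction to take the rigid (quasi-conservative) classification and ``choose $\phi=0$ when possible''; your three-group sorting of the Table 2 entries (those with $\phi=0$ prescribed, those with $\phi$ arbitrary, and those with $\phi$ forced nonzero, namely ${\bf D}_1(1/2,0)$, ${\bf E}_1(\delta',1+\delta',1+\delta',\delta')$, ${\bf E}_3(1,1,\gamma)$ and ${\bf E}_4$) is just a more explicit rendering of that same step and reproduces Table 3 exactly. Your closing caveat --- that the exclusions in Group III are legitimate only because the rigid analysis recorded the full solution set of the $(\mathcal F,\phi)$ equations --- is a point the paper leaves implicit but relies on equally.
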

 \begin{proof}
In the previous results, choose $\phi=0$ when possible, to obtain the conservative algebras. 
 \end{proof}
 
\section{The classification of $2$-dimensional terminal algebras}   

The aim of this section is to present the algebraic and geometric classification of the class of the terminal algebras.

\subsection{The algebraic classification of $2$-dimensional terminal algebras}   
As a corollary of the classification of $2$-dimensional conservative algebras, we have the following result.

\begin{theorem}
Let $\bf A$ be a $2$-dimensional terminal algebra over an algebraically closed field {\bf k} of characteristic zero, 
then $\bf A$ is isomorphic to one of the non-isomorphic algebras presented in \hyperref[tab4]{Table 4}  in the appendix.
\end{theorem}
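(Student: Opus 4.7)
The plan is to combine the algebraic classification of $2$-dimensional conservative algebras (Table 3) with Kantor's characterization of terminal algebras inside the conservative class. Recall that an algebra $(\mathbb V,\mathcal P)$ is terminal if and only if
\[
[L_b^{\mathcal P},[ L_a^{\mathcal P},\mathcal P]]=-[L_{\frac{2}{3}\mathcal P(a,b)+\frac{1}{3}\mathcal P(b,a)}^{\mathcal P},\mathcal P]
\]
for all $a,b\in\mathbb V$. Equivalently, $(\mathbb V,\mathcal P)$ is terminal exactly when it is conservative and one may choose the auxiliary multiplication $\mathcal F$ appearing in the conservative identity to be the specific bilinear map
\[
\mathcal F_{\mathcal P}(a,b):=\tfrac{2}{3}\mathcal P(a,b)+\tfrac{1}{3}\mathcal P(b,a).
\]
Thus the terminal algebras are precisely those conservative algebras in Table 3 for which the family of admissible $\mathcal F$ (determined in the previous classification with $\phi=0$) contains $\mathcal F_{\mathcal P}$.

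The procedure is therefore as follows. For each representative conservative algebra in Table 3, I compute the four structure constants of $\mathcal F_{\mathcal P}(e_i,e_j)$ directly from the multiplication table. Then I read off from the earlier case analysis the form of the general admissible $\mathcal F$ (with $\phi=0$) obtained for that algebra, and I ask whether the free parameters $\lambda_i,\mu_i,\tau_i,\nu_i$ can be specialized so that $\mathcal F=\mathcal F_{\mathcal P}$. This is a small linear system in the free parameters of $\mathcal F$; compatibility of this system is the exact terminality criterion, and when it is satisfied the specialization produces the $\mathcal F$ witnessing terminality. For each parametric family $\bf A_1(\alpha),\bf B_2(\alpha),\bf D_1(\alpha,\beta),\bf D_2(\alpha,\beta),\bf E_1(\alpha,\beta,\gamma,\delta),\bf E_2(\alpha,\beta,\gamma),\bf E_3(\alpha,\beta,\gamma),\bf E_5(\alpha)$ appearing in Table 3, compatibility will cut out a subvariety of the parameter space, and these subvarieties produce the entries of Table 4.

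As a sanity check, I plan to use the remark that a commutative algebra is terminal iff it is Jordan: commutative entries of Table 3 (such as certain specializations of $\bf D_1$ or $\bf E_1$) should pass the test exactly when they are Jordan, since for commutative $\mathcal P$ one has $\mathcal F_{\mathcal P}(a,b)=\mathcal P(a,b)$ and the compatibility system reduces to the Jordan identity. I also expect the known inclusions (all Lie, all left Leibniz, and in particular $\bf A_3$ and $\bf B_3$) to automatically appear as terminal, which gives a further cross-check.

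The main obstacle is purely bookkeeping rather than conceptual: the conservative algebras with three or four free parameters ($\bf E_1,\bf E_2,\bf E_3$) give rise to several branches of the compatibility system, and one must carefully identify which equalities in the parameters $(\alpha,\beta,\gamma,\delta)$ survive after imposing $\mathcal F=\mathcal F_{\mathcal P}$, and moreover check that the surviving parameters still lie in the fundamental domains $\mathcal{U}$, $\mathcal{V}$, $\mathfrak{T}^{c}$, ${\bf k}^*_{>1}$ used to index Table 1, so that the resulting list in Table 4 contains no repetitions. Apart from this isomorphism bookkeeping, all remaining computations are short and completely analogous to those carried out for the rigid and conservative classifications in the previous section.
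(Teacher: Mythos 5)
Your proposal is correct and follows essentially the same route as the paper: the paper's proof is precisely to take the conservative classification (Table 3, with $\phi=0$) and check, case by case, whether the admissible family of auxiliary multiplications contains the specific map $\mathcal F(a,b)=\tfrac{2}{3}\mathcal P(a,b)+\tfrac{1}{3}\mathcal P(b,a)$, invoking Kantor's characterization of terminal algebras. Your additional remarks on fundamental-domain bookkeeping and the Jordan/Lie/Leibniz sanity checks are sensible elaborations of the same argument rather than a different method.
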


 \begin{proof}
In the previous results, choose $\mathcal F(a,b)=2/3{\mathcal P}(a,b)+1/3{\mathcal P}(b,a)$ when possible, to obtain the terminal algebras. 
 \end{proof}

\subsection{Closures of orbit of $2$-dimensional terminal algebras}
\label{graf2}

This subsection is devoted to show the geometric classification of the variety of $2$-dimensional terminal algebras.

Geometric classification is an interesting subject, which was studied in various papers 
(see, for example, \cite{BS99,IKV17,IKV18,GRH,KPPV16}). One of the problems in this direction is to describe all degenerations in a variety of algebras of some fixed dimension satisfying some set of identities. For example, this problem was solved
for $2$-dimensional pre-Lie algebras  in \cite{BB09}, for $2$-dimensional Jordan algebras in \cite{jor2}, 
for $3$-dimensional Novikov algebras, 
for $4$-dimensional Lie algebras,  for $4$-dimensional Zinbiel, and nilpotent Leibniz algebras in \cite{KPPV16},
for nilpotent $5$- and $6$-dimensional Lie algebras in \cite{GRH}, 
and for nilpotent $5$-dimensional and $6$-dimensional Malcev algebras in \cite{kpv}. 
%As an application of our results, one can easily recover the results of \cite{BB09}.

We will denote by ${\bf \mathfrak{T}_2}$ the set of all $\mu \in \mathcal{A}_2:= Hom(\mathbb V \otimes \mathbb V,\mathbb V)$ such that $\mu$ is a representation of a terminal algebra. Consider the Zariski topology on $\mathcal{A}_2$, giving  it  the structure of an affine variety. Since ${\bf \mathfrak{T}_2} \subset \mathcal{A}_2$ is defined by a set of polynomial identities, then ${\bf \mathfrak{T}_2}$ is a Zariski-closed of the variety of all two dimensional algebras. Thus, any $\mu\in {\bf \mathfrak{T}_2}$ is determined by the structure constants $c_{ij}^k\in \bf  k$ ($i,j,k=1,2$) such that $\mu(e_i\otimes e_j)=c_{ij}^1e_1+c_{ij}^2e_2$. 

In the previous subsection, we gave a decomposition of ${\bf \mathfrak{T}_2}$ into $GL(\mathbb  V)$-orbits (acting by conjugation), i.e, a classification, up to isomorphism, of the terminal algebras. In this subsection, we will describe the closures of the orbits of $\mu\in{\bf \mathfrak{T}_2}$, denoted by $\overline{O(\mu)}$, and we will give a geometric classification of ${\bf \mathfrak{T}_2}$, 
by  describing it's irreducible components. 

For that purpose, we will introduce some definitions. Let $\bf A$ and $\bf B$ be two $2$-dimensional algebras and let $\mu,\lambda \in {\bf \mathfrak{T}_2}$ represents $\bf A$ and $\bf B$ respectively.
We say that $\bf A$ degenerates to $\bf B$ and write $\bf A\to \bf B$ if $\lambda\in\overline{O(\mu)}$.
Moreover, we have $\overline{O(\lambda)}\subset\overline{O(\mu)}$. Hence, the definition of a degeneration does not depend on the choice of the representative $\mu$ and $\lambda$. We write $\bf A\not\to\bf  B$ if $\lambda\not\in\overline{O(\mu)}$.

Now, let $\bf A(*):=\{\bf A(\alpha)\}_{\alpha\in I}$ be a set of $2$-dimensional algebras and $\mu(\alpha)\in\mathfrak{T}_2$ represent $\bf A(\alpha)$ for $\alpha\in I$.
If $\lambda\in\overline{\{O(\mu(\alpha))\}_{\alpha\in I}}$, then we write $\bf A(*)\to \bf B$ and say that $\bf A(*)$ degenerates to $\bf B$. Again, in the opposite case we write $\bf A(*)\not\to \bf B$.

Let $\bf A(*)$, $\bf B$, $\mu(\alpha)$ ($\alpha\in I$) and $\lambda$ be as above. Let $c_{ij}^k$ ($i,j,k=1,2$) be the structure constants of $\lambda$ in the basis $e_1,e_2$. If we construct $a_i^j:{\bf  k}^*\to {\bf  k}$ ($i,j=1,2$) and $f: {\bf  k}^* \to I$ such that $a_1^1(t)e_1+a_1^2(t)e_2$ and $a_2^1(t)e_1+a_2^2(t)e_2$ is a basis of $\mathbb V$ for any  $t\in{\bf  k}^*$, and the structure constants of $\mu({f(t)})$ in this basis are $c_{ij}^k(t)\in{\bf  k}[t]$ such that $c_{ij}^k(0)=c_{ij}^k$, then $\bf A(*)\to \bf B$. In this case  $(a_1^1(t)e_1+a_1^2(t)e_2,a_2^1(t)e_1+a_2^2(t)e_2)$ and $f(t)$ are called a {\it parametrized basis} and a {\it parametrized index} for $A(*)\to B$ respectively. Note that in the case of $|I|=1$ we only need a parametrized basis.

The following lemma holds:
\begin{lemma}\label{lemmadeg}
Let $\bf A\to \bf B$ be a proper degeneration. Then it follows:
\begin{enumerate}
    \item $dim\,Aut({\bf A})<dim\,Aut({\bf B})$.
%    \item $dim\,Der(A)<dim\,Der(B)$.
%    \item $dim\,O(A)>dim\,O(B)$.
    \item $dim\,\left[\bf A, \bf A \right]\geq dim\,\left[\bf B, \bf B\right]$.
\end{enumerate}
\end{lemma}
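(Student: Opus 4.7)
The plan is to establish both parts via standard algebraic-geometric facts about the $GL(\mathbb V)$-action on $\mathcal{A}_2$, applied inside the closed subvariety $\mathfrak{T}_2$.

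For part (1), I would first recall that if $\mu$ represents $\bf A$, then the orbit $O(\mu)$ is $GL(\mathbb V)$-equivariantly isomorphic to $GL(\mathbb V)/\mathrm{Stab}(\mu)$, where $\mathrm{Stab}(\mu)=\mathrm{Aut}(\bf A)$. Consequently
\[
\dim O(\mu)=\dim GL(\mathbb V)-\dim\mathrm{Aut}(\bf A),
\]
and similarly for $\lambda$ and $\bf B$. The key input is then the standard fact that a $GL(\mathbb V)$-orbit is locally closed and its boundary $\overline{O(\mu)}\setminus O(\mu)$ is a finite union of $GL(\mathbb V)$-orbits of \emph{strictly smaller} dimension. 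Since the degeneration is proper, $\lambda\notin O(\mu)$, so $O(\lambda)\subset\overline{O(\mu)}\setminus O(\mu)$ and hence $\dim O(\lambda)<\dim O(\mu)$. Substituting the two dimension formulas and cancelling $\dim GL(\mathbb V)$ immediately yields $\dim\mathrm{Aut}(\bf A)<\dim\mathrm{Aut}(\bf B)$.

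For part (2), I would view $\dim[\bf A,\bf A]$ as the rank of the linear map $\mu:\mathbb V\otimes\mathbb V\to\mathbb V$, i.e.\ the rank of the $2\times 4$ matrix of structure constants $(c_{ij}^k)$. For any integer $d$ the condition $\mathrm{rk}(\mu)\le d$ is the vanishing of all $(d+1)\times(d+1)$ minors of this matrix, so the subset
\[
Z_d=\{\mu\in\mathcal{A}_2\mid \dim\mathrm{Im}(\mu)\le d\}
\]
is Zariski closed in $\mathcal{A}_2$. Setting $d=\dim[\bf A,\bf A]$, the whole orbit $O(\mu)$ lies in $Z_d$ (the rank is a $GL(\mathbb V)$-invariant, or equivalently isomorphic algebras have the same derived dimension), so $\overline{O(\mu)}\subset Z_d$. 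Since $\lambda\in\overline{O(\mu)}$, this gives $\dim[\bf B,\bf B]=\mathrm{rk}(\lambda)\le d=\dim[\bf A,\bf A]$.

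Neither step is technically difficult; both reduce to general properties of algebraic group actions (orbit closure and constructibility for (1), lower semicontinuity of rank for (2)) and do not use the terminal condition, so the lemma holds inside any closed subvariety of $\mathcal{A}_2$, in particular $\mathfrak{T}_2$. The only point that needs some care is the orbit boundary statement in part (1): I would cite it as the standard consequence of Chevalley's theorem (orbits are constructible, hence locally closed, and the complement of an orbit in its closure is closed of strictly smaller dimension, decomposing into unions of orbits of smaller dimension). Everything else is a direct computation with structure constants.
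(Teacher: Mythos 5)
Your argument is correct, and it is the standard one: the paper itself states this lemma without proof, treating it as a well-known fact from degeneration theory (cf.\ the cited works of Burde--Steinhoff and Grunewald--O'Halloran), and the facts it implicitly relies on are exactly the ones you spell out --- the orbit-dimension formula $\dim O(\mu)=\dim GL(\mathbb V)-\dim \mathrm{Aut}({\bf A})$ together with the strict dimension drop on the orbit boundary for part (1), and the Zariski-closedness of the rank condition $\dim \mathrm{Im}(\mu)\le d$ for part (2). Both steps are sound as written (in particular, irreducibility of $GL(\mathbb V)$ guarantees that $O(\mu)$ is open and dense in its closure, so the boundary is a proper closed subset of strictly smaller dimension), so there is nothing to add.
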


The following result in \cite{KPPV16} gives us a constructive method to prove non-degenerations.

\begin{lemma}\label{lemmaseparating}
Let $\mathcal{B}$ be a  Borel subgroup of $GL(\mathbb V)$ and let $\mathcal{R}$ be a closed subset of ${\bf \mathfrak{T}_2}$ such that $\mathcal{R}$ is stable under the action of $\mathcal{B}$. If $\bf A(*)\to \bf B$ and $\mu(\alpha)$, a structure representing  of $\bf A(\alpha)$, is in $\mathcal{R}$ for all $\alpha\in I$, then there exists a representation $\lambda$ of $\bf B$ such that $\lambda \in\mathcal{R}$.
\end{lemma}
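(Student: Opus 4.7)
The plan is to exploit the completeness of the flag variety $GL(\mathbb V)/\mathcal{B}$ to show that the $GL(\mathbb V)$-saturation $GL(\mathbb V)\cdot\mathcal{R}$ is Zariski-closed in $\mathfrak{T}_2$. Once this is established, the conclusion is immediate: each $\mu(\alpha)$ belongs to $\mathcal{R}\subseteq GL(\mathbb V)\cdot\mathcal{R}$, so every orbit $O(\mu(\alpha))$ also lies there; taking the Zariski closure of their union (which contains $\lambda$) and using closedness of $GL(\mathbb V)\cdot\mathcal{R}$, we can write $\lambda=g\cdot\lambda'$ for some $g\in GL(\mathbb V)$ and $\lambda'\in\mathcal{R}$. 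Since $\lambda'$ is isomorphic to $\lambda$ via $g$, it represents $\bf B$ and lies in $\mathcal{R}$, as desired.

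The heart of the argument is therefore the closedness of $GL(\mathbb V)\cdot\mathcal{R}$. I would form the associated fibre bundle $\mathcal{E}:=GL(\mathbb V)\times^{\mathcal{B}}\mathcal{R}$, the geometric quotient of $GL(\mathbb V)\times\mathcal{R}$ by the free $\mathcal{B}$-action $(g,x)\cdot b=(gb,b^{-1}\cdot x)$, which exists as a variety because $\mathcal{R}$ is $\mathcal{B}$-stable and closed. The map $[g,x]\mapsto(g\mathcal{B},\,g\cdot x)$ embeds $\mathcal{E}$ as a closed subvariety of $GL(\mathbb V)/\mathcal{B}\times\mathfrak{T}_2$, and the second projection restricted to $\mathcal{E}$ is the action morphism $\pi\colon\mathcal{E}\to\mathfrak{T}_2$, whose image is exactly $GL(\mathbb V)\cdot\mathcal{R}$. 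Because $\mathcal{B}$ is a Borel subgroup, $GL(\mathbb V)/\mathcal{B}$ is projective, hence complete; so the projection $GL(\mathbb V)/\mathcal{B}\times\mathfrak{T}_2\to\mathfrak{T}_2$ is a closed map, and therefore $\pi(\mathcal{E})=GL(\mathbb V)\cdot\mathcal{R}$ is closed in $\mathfrak{T}_2$.

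The step I expect to require the most care is the verification that the embedding $\mathcal{E}\hookrightarrow GL(\mathbb V)/\mathcal{B}\times\mathfrak{T}_2$ has closed image, since this is where both the $\mathcal{B}$-invariance and the closedness of $\mathcal{R}$ are genuinely used: the fibre of the first projection over $g\mathcal{B}$ is $\{g\mathcal{B}\}\times g\cdot\mathcal{R}$, which is well-defined precisely because $\mathcal{R}$ is $\mathcal{B}$-stable, and is Zariski-closed in $\{g\mathcal{B}\}\times\mathfrak{T}_2$ because $\mathcal{R}$ is. Once this point is settled, the rest of the argument reduces to the standard fact that projection from a complete variety is closed, after which the set-theoretic chase in the first paragraph delivers the desired representative of $\bf B$ inside $\mathcal{R}$.
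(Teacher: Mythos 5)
Your argument is correct. Note that the paper does not prove this lemma at all: it is quoted verbatim from \cite{KPPV16}, and the proof given there (and in the earlier degeneration literature going back to Grunewald--O'Halloran and Burde--Steinhoff) is exactly the one you give --- the saturation $GL(\mathbb V)\cdot\mathcal{R}$ is closed because it is the image of the closed subvariety $GL(\mathbb V)\times^{\mathcal{B}}\mathcal{R}\subset GL(\mathbb V)/\mathcal{B}\times\mathcal{A}_2$ under the projection from the complete flag variety, after which $\lambda\in\overline{\bigcup_\alpha O(\mu(\alpha))}\subseteq\overline{GL(\mathbb V)\cdot\mathcal{R}}=GL(\mathbb V)\cdot\mathcal{R}$ yields the desired representative. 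So your proposal reconstructs the standard proof of the cited result; the one point worth spelling out if you write it up in full is the verification that the image of $(g,x)\mapsto(g\mathcal{B},g\cdot x)$ is closed, which you correctly identify as the step using both hypotheses on $\mathcal{R}$ (one checks it by pulling back along the quotient $GL(\mathbb V)\times\mathcal{A}_2\to GL(\mathbb V)/\mathcal{B}\times\mathcal{A}_2$).
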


Constructing a set $\mathcal{R}$ under the conditions of the previous result, such that $\mu(\alpha)\in\mathcal{R}$ for any $\alpha\in I$ and $O(\lambda)\cap \mathcal{R}=\varnothing$, gives us the non-degeneration $\bf A(*)\not\to \bf B$. In this case, we call $\mathcal{R}$ a {\it separating set} for $\bf A(*)\not\to \bf B$. 
In this paper, we always choose $\mathcal{B}$ as the lower triangular matrices. To prove a non-degeneration, we present the separating set and omit any verification, which can be obtained by straightforward calculations.

\begin{theorem} \label{geotermpuntos}
The variety of all $2$-dimensional terminal algebras  has the graph of primary degenerations presented on  \hyperref[fig1] {Figure 1}  in the appendix.
\end{theorem}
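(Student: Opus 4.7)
The plan is to combine the algebraic classification of $2$-dimensional terminal algebras from \hyperref[tab4]{Table 4} with the two criteria provided by Lemma \ref{lemmadeg} and Lemma \ref{lemmaseparating}, together with explicit parametrized bases, in order to determine for every ordered pair of algebras on the list whether a degeneration occurs, and then to read off those that are primary.

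First, for each algebra (or family) $\bf A$ in \hyperref[tab4]{Table 4} I would compute the two discrete invariants attached to $\bf A$, namely $\dim\,\mathrm{Aut}({\bf A})$ and $\dim\,[\bf A,\bf A]$. These can be read off from the structure constants by solving a small linear system for automorphisms (four scalar unknowns subject to the multiplication being preserved) and by computing the span of all products $e_i\cdot e_j$. Lemma \ref{lemmadeg} then rules out immediately all potential degenerations $\bf A\to \bf B$ for which $\dim\,\mathrm{Aut}({\bf A})\ge \dim\,\mathrm{Aut}({\bf B})$ (excluding $\bf A=\bf B$) or for which $\dim\,[\bf A,\bf A]<\dim\,[\bf B,\bf B]$. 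This will prune the list of candidate degenerations substantially and yield a first Hasse-type upper bound on the graph in \hyperref[fig1]{Figure 1}.

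Next, for each surviving candidate $\bf A(*)\to \bf B$ I would try to exhibit an explicit parametrized basis $\bigl(a_1^1(t)e_1+a_1^2(t)e_2,\;a_2^1(t)e_1+a_2^2(t)e_2\bigr)$, together with a parametrized index $f(t)$ when $\bf A(*)$ is a family, such that after rewriting the multiplication of $\bf A(f(t))$ in this basis, the resulting structure constants $c_{ij}^k(t)$ lie in ${\bf k}[t]$ and satisfy $c_{ij}^k(0)=c_{ij}^k(\lambda)$ where $\lambda$ represents $\bf B$. Each positive edge in \hyperref[fig1]{Figure 1} is witnessed by one such change of basis; the families indexed by a parameter are the trickiest because one must send $f(t)$ to the appropriate boundary value of the parameter domain (typically $0$, $1$, or $\infty$), so a few of the bases will need to be rescaled by suitable negative powers of $t$ to absorb diverging parameters, in the style of classical contractions of Lie and Jordan algebras.

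For the remaining pairs $\bf A(*)\not\to \bf B$, I would produce a separating set $\mathcal R$ as in Lemma \ref{lemmaseparating}: a Zariski-closed subset of $\mathfrak{T}_2$ stable under the Borel subgroup $\mathcal B$ of lower triangular matrices, which contains every $\mu(\alpha)$ representing $\bf A(\alpha)$ but does not meet the orbit of any representative of $\bf B$. Such an $\mathcal R$ is typically defined by a short list of polynomial conditions on the structure constants $c_{ij}^k$ (for instance $c_{11}^2=0$ or linear relations among $c_{11}^1,c_{12}^1,c_{21}^1$) that are preserved by conjugation by lower triangular matrices. Once all the primary edges are established and all non-edges are ruled out, the irreducible components of $\mathfrak{T}_2$ are read off as the closures of those orbits (or families of orbits) which are maximal in the resulting poset.

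The main obstacle will be the production of separating sets for the non-degenerations that Lemma \ref{lemmadeg} does not already exclude: the two numerical invariants are coarse, and many pairs with matching dimension pattern must be distinguished by a more subtle Borel invariant. Finding, for each such pair, a short list of polynomial relations that are preserved by the lower triangular group and that cut out precisely the $\mathcal B$-orbit closure of the source family is the delicate step. By contrast, producing the parametrized bases for the positive degenerations and organizing the bookkeeping for the infinite families $\bf A_1(\alpha)$, $\bf D_2(\alpha,\beta)$, $\bf E_1$--$\bf E_3$ and their limits is routine once a suitable parametrization of each boundary stratum has been fixed.
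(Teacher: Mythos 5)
Your proposal follows exactly the paper's strategy: prune candidate degenerations with the numerical invariants of Lemma \ref{lemmadeg}, witness the surviving primary degenerations by explicit parametrized bases (the paper's Table 5), and kill the remaining pairs with separating sets stable under the Borel subgroup of lower triangular matrices via Lemma \ref{lemmaseparating} (the paper's Table 6). The only cosmetic difference is that parametrized indices are not actually needed for the primary degenerations of Theorem \ref{geotermpuntos} (they enter only for the series in Theorem \ref{geotermseries}), but this does not affect the correctness of the plan.
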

\begin{proof} 
    This proof is mostly based in results in \cite{kv17} and in the ideas described there to construct separating sets. All primary degenerations are showed in \hyperref[tab5]{Table 5} in the appendix and all non-degenerations that do not follow from Lemma \ref{lemmadeg} are presented in \hyperref[tab6]{Table 6} in the appendix.
\end{proof}

The following result gives us the description of the closure of the orbits of the infinite series in ${\bf \mathfrak{T}_2}$. For a parametric series of algebras $X$, we will denote by $ X(*)$, the set of all algebras $X(\Gamma)$ that are defined and are terminal, i.e, $\mathfrak T_{07}(*)=\left\{ \mathfrak T_{07}(\alpha): \alpha\in {\bf k}\setminus \left\{1\right\}\right\}$, $\mathfrak T_{08}(*)=\left\{ \mathfrak T_{08}(\alpha): \alpha\in {\bf k}\setminus \left\{2\right\}\right\}$ and $\mathfrak T_{10}(*)=\left\{ \mathfrak T_{10}(\alpha): \alpha\in {\bf  k}\right\}$.

\begin{theorem} \label{geotermseries} 

A description of the closures of the orbits of the infinite series of algebras in ${\bf \mathfrak{T}_2}$ is given in the following table.
$$\begin{array}{|l|l|}
\hline
\mbox{Set}   &  \mbox{Description}  \\
\hline
\hline

\overline{O({\mathfrak{T}_{07}}(*))}\Tstrut\Bstrut  & {\mathfrak{T}_{07}}(*), {\mathfrak{T}_{03}}, {\mathfrak{T}_{01}}, 
{\mathfrak{T}_{04}}, {\mathfrak{T}_{10}}(1), {\bf  k}^2   \\
\hline

\overline{O({\mathfrak{T}_{08}}(*))}\Tstrut\Bstrut  & {\mathfrak{T}_{08}}(*), {\mathfrak{T}_{03}}, {\mathfrak{T}_{02}}, {\mathfrak{T}_{05}}, {\mathfrak{T}_{10}}(2), {\mathfrak{T}_{07}}(3/2), {\bf  k}^2   \\
\hline

\overline{O({\mathfrak{T}_{10}}(*))}\Tstrut\Bstrut  & {\mathfrak{T}_{10}}(*), {\mathfrak{T}_{06}}, {\bf  k}^2  \\
\hline

\end{array}$$

\end{theorem}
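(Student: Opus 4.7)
The plan is to treat each of the three series $\mathfrak{T}_{07}(*)$, $\mathfrak{T}_{08}(*)$, and $\mathfrak{T}_{10}(*)$ separately, and in each case verify two things: (i) every algebra listed in the corresponding row actually lies in the closure, and (ii) no algebra outside the list lies in the closure. Throughout, I would work with the lower triangular Borel subgroup $\mathcal{B}\subset GL(\mathbb V)$, as is done in the proof of Theorem \ref{geotermpuntos}.

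For step (i), the primary degenerations among $\mathfrak{T}_{07}(\alpha)$, $\mathfrak{T}_{08}(\alpha)$, $\mathfrak{T}_{10}(\alpha)$ and the sporadic algebras $\mathfrak{T}_{01},\dots,\mathfrak{T}_{06}$ recorded in \hyperref[tab5]{Table 5} already yield most inclusions by transitivity of degeneration and the closure of $\overline{O(\mu)}$ under specialization of parameters. What remains are the degenerations coming from letting the parameter $\alpha$ itself tend to a singular value: these include $\mathfrak{T}_{07}(*)\to \mathfrak{T}_{10}(1)$, $\mathfrak{T}_{08}(*)\to \mathfrak{T}_{10}(2)$, $\mathfrak{T}_{08}(*)\to \mathfrak{T}_{07}(3/2)$, and the degenerations to the trivial algebra $\mathbf{k}^2$. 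For each of these I would exhibit a parametrized basis $(a_1^1(t)e_1+a_1^2(t)e_2,\,a_2^1(t)e_1+a_2^2(t)e_2)$ together with a parametrized index $f(t)$ such that the structure constants $c_{ij}^k(t)$ of $\mu(f(t))$ in this basis lie in $\mathbf{k}[t]$ and specialize at $t=0$ to the structure constants of the target; in particular, to reach $\mathbf{k}^2$ one uses a basis change scaling all coordinates by a positive power of $t$.

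For step (ii), the easy non-inclusions follow from Lemma \ref{lemmadeg}, by comparing $\dim\,\mathrm{Aut}$ and $\dim\,[\bf A,\bf A]$ of the candidate target against any representative $\mathfrak{T}_{07}(\alpha)$, $\mathfrak{T}_{08}(\alpha)$ or $\mathfrak{T}_{10}(\alpha)$. The more delicate non-inclusions, where these invariants are compatible, would be handled via Lemma \ref{lemmaseparating}: one constructs for each series a Borel-stable closed subset $\mathcal{R}\subset\mathfrak{T}_2$ containing every $\mu(\alpha)$ in the series and disjoint from the orbit of the excluded target. Concretely, $\mathcal{R}$ is cut out by a system of polynomial relations and inequalities on the structure constants $c_{ij}^k$ that are automatic for the chosen infinite family (in their canonical form given by \hyperref[tab1]{Table 1}) and are preserved under the lower triangular action $b\mapsto g b g^{-1}\otimes g^{-1}$.

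The main obstacle I anticipate is constructing the separating sets for $\mathfrak{T}_{07}(*)\not\to \mathfrak{T}_{10}(\alpha)$ for $\alpha\neq 1$, and for the analogous statements for $\mathfrak{T}_{08}(*)$, since the three series lie in a common stratum and one has to distinguish between them using Borel-invariant conditions rather than full $GL$-invariants. Here I expect to follow the template from \cite{kv17}: isolate the one-dimensional subspace on which the left multiplication acts as a scalar forced by $\alpha$, express this condition as a homogeneous polynomial identity in the $c_{ij}^k$ stable under the Borel, and then verify by direct substitution that this identity fails for every representative of the target orbit. Once these separating sets are in place, the remaining non-degenerations follow by composition with the primary non-degenerations from \hyperref[tab6]{Table 6}.
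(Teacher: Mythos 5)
Your plan coincides with the paper's proof: the degenerations are established exactly by exhibiting parametrized bases together with parametrized indices $f(t)$ tending to the excluded parameter values (the paper's Table 7, combined with transitivity through the primary degenerations of Table 5), and the non-degenerations by Borel-stable separating sets via Lemma \ref{lemmaseparating} (the paper's Table 8). The only caution is that Lemma \ref{lemmadeg} is stated for a degeneration between two individual algebras, so invoking it for a whole series $\mathbf{A}(*)\to\mathbf{B}$ requires an extra dimension count for the one-parameter union of orbits; the paper sidesteps this by handling all series non-degenerations with separating sets.
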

\begin{proof} 

Degenerations are proved using the parametrized bases and indexes in \hyperref[tab7]{Table 7}  in the appendix. Also, the non-degenerations can be proven using the separating sets in \hyperref[tab8]{Table 8}  in the appendix.

\end{proof}

From Theorem \ref{geotermpuntos} and Theorem \ref{geotermseries}  we obtain the following corollaries.

\begin{corollary}

The lattice of subsets for ${\bf \mathfrak{T}_2}$ is presented on
\hyperref[fig2]{Figure 2}  in the appendix. 
\end{corollary}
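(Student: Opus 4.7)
The plan is to assemble the lattice of $GL(\mathbb{V})$-invariant Zariski-closed subsets of ${\bf \mathfrak{T}_2}$ purely from the two preceding theorems, without computing any new degeneration or non-degeneration. The point of a corollary of this type is combinatorial: once the primary degeneration graph (Theorem \ref{geotermpuntos}) and the closures of the parametric families (Theorem \ref{geotermseries}) are known, the entire lattice of closed unions of orbits is determined by taking transitive closures and intersecting.

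First, I would list the nodes. Every orbit closure $\overline{O(\mu)}$ for a rigid point $\mathfrak{T}_i$ and every family closure $\overline{O(\mathfrak{T}_{07}(*))}$, $\overline{O(\mathfrak{T}_{08}(*))}$, $\overline{O(\mathfrak{T}_{10}(*))}$ contributes a node, along with any finite intersections of these. Next, I would take the reflexive-transitive closure of the primary edges in \hyperref[fig1]{Figure 1}: an edge $\bf A\to \bf B$ means $\overline{O(\lambda_{\bf B})}\subset \overline{O(\mu_{\bf A})}$, and Theorem \ref{geotermseries} contributes extra containments (for instance $\overline{O(\mathfrak{T}_{07}(*))}$ absorbs $\mathfrak{T}_{03},\mathfrak{T}_{01},\mathfrak{T}_{04},\mathfrak{T}_{10}(1)$, and $\overline{O(\mathfrak{T}_{08}(*))}$ absorbs $\mathfrak{T}_{03},\mathfrak{T}_{02},\mathfrak{T}_{05},\mathfrak{T}_{10}(2),\mathfrak{T}_{07}(3/2)$). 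The non-degenerations in \hyperref[tab6]{Table 6} and \hyperref[tab8]{Table 8} are exactly what is needed to rule out any spurious containments, so that the Hasse diagram of the poset is pinned down uniquely.

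Once all the containments are compiled, the lattice operations come for free: the meet of two closed subsets is their (scheme-theoretic) intersection, and the join is the closure of their union, which is already a closed $GL(\mathbb{V})$-invariant subset built out of the orbit closures already enumerated. I would therefore verify only that each pairwise meet and join among the generators is again one of the sets listed in \hyperref[fig2]{Figure 2}, which reduces to a finite check against Theorems \ref{geotermpuntos} and \ref{geotermseries}.

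The main obstacle I expect is purely bookkeeping: making sure no containment among orbit closures is overlooked when one of the endpoints lies inside a parametric family closure. Concretely, special-parameter members such as $\mathfrak{T}_{07}(3/2)$ or $\mathfrak{T}_{10}(1), \mathfrak{T}_{10}(2)$ must be checked against both the finite part of the diagram and the closures $\overline{O(\mathfrak{T}_{08}(*))}$ and $\overline{O(\mathfrak{T}_{07}(*))}$ to ensure the lattice edges are drawn at the correct level. Once this coherence check is performed, \hyperref[fig2]{Figure 2} is obtained as the Hasse diagram of the resulting finite poset, and the corollary follows.
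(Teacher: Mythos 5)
Your proposal is correct and matches the paper's (implicit) argument: the corollary is obtained purely by compiling the containments from Theorem \ref{geotermpuntos} and Theorem \ref{geotermseries}, taking transitive closures, and using the non-degenerations of Tables 6 and 8 to exclude any further inclusions. Your attention to the special-parameter members such as $\mathfrak{T}_{07}(3/2)$, $\mathfrak{T}_{10}(1)$ and $\mathfrak{T}_{10}(2)$, which appear as separate nodes precisely because they lie in the closures of other families, is exactly the bookkeeping the paper relies on.
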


\begin{corollary}

The irreducible components in the variety of 2-dimensional terminal algebras ${\bf \mathfrak{T}_2}$ are:
$$
\begin{aligned}
\overline{O\big({\mathfrak{T}_{07}}(*)\big)}&=\{{\mathfrak{T}_{07}}(*), {\mathfrak{T}_{03}}, {\mathfrak{T}_{01}}, 
{\mathfrak{T}_{04}}, {\mathfrak{T}_{10}}(1), {\bf  k}^2\},\\
\overline{O\big({\mathfrak{T}_{09}}\big)}&=\{\mathfrak{T}_{09}, {\mathfrak{T}_{07}}(0), {\mathfrak{T}_{08}}(1),{\mathfrak{T}_{03}}, {\bf  k}^2\},\\
\overline{O\big({\mathfrak{T}_{08}}(*)}\big)&=\{{\mathfrak{T}_{08}}(*), {\mathfrak{T}_{03}}, {\mathfrak{T}_{02}}, {\mathfrak{T}_{05}}, {\mathfrak{T}_{10}}(2), {\mathfrak{T}_{07}}(3/2), {\bf  k}^2\},\\
\overline{O\big({\mathfrak{T}_{10}}(*)\big)}&=\{{\mathfrak{T}_{10}}(*), {\mathfrak{T}_{06}}, {\bf  k}^2\}.
\end{aligned}
$$
\end{corollary}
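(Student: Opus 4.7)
The plan is to identify the irreducible components directly from the degeneration data established in Theorem \ref{geotermpuntos} and Theorem \ref{geotermseries}. Since $\mathfrak{T}_2$ is a Zariski-closed subset of the affine variety $\mathcal{A}_2$ and decomposes as a union of finitely many $GL(\mathbb{V})$-orbit closures (over the points and parametric families of the algebraic classification), its irreducible components are precisely the maximal elements under inclusion among the closures $\overline{O(\mu)}$ and parametric-family closures $\overline{O(\mathfrak{T}_X(*))}$.

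First, I would assemble the four candidate closures. Three of them, $\overline{O(\mathfrak{T}_{07}(*))}$, $\overline{O(\mathfrak{T}_{08}(*))}$ and $\overline{O(\mathfrak{T}_{10}(*))}$, are already given by Theorem \ref{geotermseries}. For the remaining candidate $\overline{O(\mathfrak{T}_{09})}$, I would read off the set by following, transitively, all arrows out of $\mathfrak{T}_{09}$ in the graph of primary degenerations from Theorem \ref{geotermpuntos} (invoking transitivity of the relation $\overline{O(\lambda)}\subset \overline{O(\mu)}$); the result is $\{\mathfrak{T}_{09}, \mathfrak{T}_{07}(0), \mathfrak{T}_{08}(1), \mathfrak{T}_{03}, \mathbf{k}^2\}$.

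Next, I would verify maximality. For each of the four candidates, the graph of Theorem \ref{geotermpuntos} together with the parametric-family closures of Theorem \ref{geotermseries} show at once that no other algebra or parametric family degenerates to a generic point of that candidate; in particular, there is no incoming edge at $\mathfrak{T}_{09}$, nor to a generic member $\mathfrak{T}_{07}(\alpha)$, $\mathfrak{T}_{08}(\alpha)$, or $\mathfrak{T}_{10}(\alpha)$. Conversely, every other algebra of the algebraic classification (Table 4) appears as a boundary point of at least one of the four listed closures, as read directly from the statements of Theorem \ref{geotermpuntos} and Theorem \ref{geotermseries}, so it cannot be a separate component.

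The main obstacle will be ensuring that $\overline{O(\mathfrak{T}_{09})}$, whose underlying stratum is a single orbit rather than a one-parameter family, is genuinely a separate irreducible component rather than being contained in one of the three family closures. This can be ruled out by combining two observations: (i) the absence of any degeneration \emph{into} $\mathfrak{T}_{09}$ from $\mathfrak{T}_{07}(*)$, $\mathfrak{T}_{08}(*)$ or $\mathfrak{T}_{10}(*)$ in the combined degeneration data, and (ii) a dimension count via Lemma \ref{lemmadeg}(1) showing $\dim O(\mathfrak{T}_{09}) = 4$, matching the generic dimension of the parametric closures and confirming that $\overline{O(\mathfrak{T}_{09})}$ is a genuine four-dimensional component disjoint in generic behavior from the other three.
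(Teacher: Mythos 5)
Your proposal is correct and follows essentially the same route as the paper, which derives this corollary directly from Theorem \ref{geotermpuntos} and Theorem \ref{geotermseries} by listing the four closures and observing that they are irreducible, cover all of ${\bf \mathfrak{T}_2}$, and are pairwise maximal by the recorded non-degenerations. Your extra care about $\overline{O(\mathfrak{T}_{09})}$ is the right point to worry about, and your dimension argument (with $\dim O(\mathfrak{T}_{09})=\dim GL(\mathbb V)-\dim Aut(\mathfrak{T}_{09})=4$, so that $\mathfrak{T}_{09}$ cannot lie in the boundary of any of the $4$-dimensional family closures) settles it correctly.
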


\begin{corollary}

There is only one algebra with open orbit in the variety of $2$-dimensional terminal algebras. It is $\mathfrak{T}_{09}.$
\end{corollary}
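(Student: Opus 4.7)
The plan is to deduce the statement directly from the preceding corollary, which lists the four irreducible components of ${\bf \mathfrak{T}_2}$. First I would record the general criterion: an orbit $O(\mu) \subseteq {\bf \mathfrak{T}_2}$ is open in ${\bf \mathfrak{T}_2}$ if and only if $\overline{O(\mu)}$ is an irreducible component. Indeed, every $GL(\mathbb V)$-orbit is irreducible and locally closed, hence open and dense in its own closure. If $\overline{O(\mu)}$ equals an irreducible component $C$, then the complement of $O(\mu)$ in ${\bf \mathfrak{T}_2}$ is the union of the remaining components together with $C \setminus O(\mu)$, a finite union of closed sets, so $O(\mu)$ is open. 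Conversely, if $O(\mu)$ is open in ${\bf \mathfrak{T}_2}$ and $\overline{O(\mu)}$ were properly contained in an irreducible closed set $Z$, then $Z \cap O(\mu)$ would be a nonempty open subset of $Z$, forcing $Z \subseteq \overline{O(\mu)}$, a contradiction.

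With this criterion in hand, I would inspect the four components
$$
\overline{O(\mathfrak{T}_{07}(*))}, \quad \overline{O(\mathfrak{T}_{09})}, \quad \overline{O(\mathfrak{T}_{08}(*))}, \quad \overline{O(\mathfrak{T}_{10}(*))},
$$
and ask which of them is the orbit closure of a single algebra. Three of the four are closures of one-parameter families of pairwise non-isomorphic algebras. If any individual $\mathfrak{T}_{0i}(\alpha_0)$ with $i \in \{7, 8, 10\}$ had open orbit, then by openness the nearby points $\mathfrak{T}_{0i}(\alpha)$ for $\alpha$ sufficiently close to $\alpha_0$ would lie in $O(\mathfrak{T}_{0i}(\alpha_0))$, i.e., $\mathfrak{T}_{0i}(\alpha) \cong \mathfrak{T}_{0i}(\alpha_0)$ as algebras, contradicting the classification of $2$-dimensional terminal algebras. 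Hence no algebra in these three families has an open orbit.

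The remaining component $\overline{O(\mathfrak{T}_{09})}$ is, by its very definition, the closure of the orbit of the single algebra $\mathfrak{T}_{09}$, so the criterion immediately yields that $O(\mathfrak{T}_{09})$ is open in ${\bf \mathfrak{T}_2}$. Combining the previous paragraphs, this leaves exactly one algebra with open orbit, namely $\mathfrak{T}_{09}$, as asserted. I do not foresee any serious obstacle: the argument is essentially bookkeeping on top of the explicit description of irreducible components, and the only slightly subtle point is the continuity-plus-non-isomorphism argument ruling out single algebras from the three parametric components.
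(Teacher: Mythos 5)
Your criterion (an orbit is open in ${\bf \mathfrak{T}_2}$ if and only if its closure is an irreducible component) is correct, as is the continuity-plus-pairwise-non-isomorphism argument showing that no individual member of the families $\mathfrak{T}_{07}(*)$, $\mathfrak{T}_{08}(*)$, $\mathfrak{T}_{10}(*)$ has an open orbit, and the observation that $\overline{O(\mathfrak{T}_{09})}$ is by construction the closure of a single orbit. The paper offers no written proof, and this is essentially the intended reading of the two preceding corollaries.

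However, your concluding sentence ``this leaves exactly one algebra with open orbit'' is not yet justified: you have only examined $\mathfrak{T}_{09}$ and the three parametric families, and have said nothing about $\mathfrak{T}_{01},\dots,\mathfrak{T}_{06}$ and ${\bf k}^2$. By your own criterion, any such algebra $X$ with open orbit would have $\overline{O(X)}$ equal to one of the four components, and this possibility must be excluded. The quickest repair stays inside your framework: since $O(X)$ is open in its closure, if $\overline{O(X)}$ equalled the component $\overline{O(\mathfrak{T}_{0i}(*))}$ then $O(X)$ would be a nonempty open subset of that irreducible set and hence would meet the dense subset $\bigcup_\alpha O(\mathfrak{T}_{0i}(\alpha))$, forcing $X\cong\mathfrak{T}_{0i}(\alpha)$ for some $\alpha$ --- a case you have already ruled out; and $\overline{O(X)}=\overline{O(\mathfrak{T}_{09})}$ with $X\not\cong\mathfrak{T}_{09}$ is impossible because two disjoint dense open orbits in an irreducible set would have to intersect. (Alternatively, one can simply read off from the lattice in Figure 2 that for every $X$ other than $\mathfrak{T}_{09}$ the closure $\overline{O(X)}$ sits strictly below one of the four maximal elements, hence is not a component.) With that one additional step the argument is complete.
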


\subsection{Some conjectures}
Using the geometric classification of $2$-dimensional terminal algebras we can give two conjectures about the variety of $n$-dimensional terminal algebras.
Let us consider $n$-dimensional analogues of the algebras $\mathfrak{T}_{09}$ and $\mathfrak{T}_{10}(\alpha):$
\begin{enumerate}
    \item[I.] the algebra $ \oplus {\bf  k} e_i  $ defined by
$$ \oplus {\bf  k} e_i  = \langle e_1,  \ldots, e_{n} \rangle \ : \ e_i^2=e_i, \ e_ie_j =0, \ (i\neq j);   $$

    \item[II.] the family $\nu_n(\alpha)$ of algebras defined by
$$ \nu_n(\alpha) = \langle e, n_1, \ldots, n_{n-1}\rangle \ : \ e^2=e, \ en_i=\alpha n_i, \ n_ie=(1-\alpha) n_i  \ (i=1, \ldots, n-1; \alpha \in \bf  k).   $$
\end{enumerate}

It is easy to see that  algebras $ \oplus {\bf  k} e_i $ and $ \nu_n(\alpha)$ are terminal.

\begin{con}
The $n$-dimensional terminal algebra $\bigoplus {\bf  k} e_i$ has an open orbit.
\end{con}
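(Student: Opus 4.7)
My plan is to establish the conjecture by proving that the $GL(\mathbb V)$-orbit of $\mu := \bigoplus {\bf k}e_i$ has the same dimension as the Zariski tangent space to the variety $\mathfrak T_n$ of $n$-dimensional terminal algebras at $\mu$. Since $O(\mu) \subset \mathfrak T_n$ is irreducible and the tangent space always bounds the local dimension of the ambient variety from above, this equality will force $\mu$ to be a smooth point of $\mathfrak T_n$ and $O(\mu)$ to be an open subset of its irreducible component, which is the geometric meaning of an open orbit.

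First I would compute $\mathrm{Aut}(\mu)$. Since $\mu$ is commutative and associative with exactly $n$ primitive idempotents $e_1, \ldots, e_n$ (and they are the only ones in the algebra), any algebra automorphism must permute them, yielding $\mathrm{Aut}(\mu) \cong S_n$. This group is finite, so $\dim O(\mu) = n^2 - \dim \mathrm{Aut}(\mu) = n^2$. This also matches the $n=2$ case, where $\mathfrak T_{09}$ is known to have open orbit.

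Second, I would show that $\dim T_\mu \mathfrak T_n \le n^2$; combined with the automatic inequality $n^2 = \dim T_\mu O(\mu) \le \dim T_\mu \mathfrak T_n$, this will close the argument. A tangent vector is a bilinear map $\lambda$, with $\lambda(e_i, e_j) = \sum_k c_{ij}^k e_k$, such that $[[[\mu + \varepsilon\lambda, a], \mu + \varepsilon\lambda], \mu + \varepsilon\lambda] = 0$ modulo $\varepsilon^2$ for every $a \in \mathbb V$. The linearization of this identity produces a system of linear equations in the $n^3$ unknowns $c_{ij}^k$; substituting $a = e_r$ and exploiting the diagonal structure $\mu(e_i, e_j) = \delta_{ij}e_i$ should collapse most bracket terms and leave constraints affecting only a small number of index patterns. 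The tangent space to the orbit, namely the image of the map $A \mapsto [A,\mu]$ for $A \in \mathrm{End}(\mathbb V)$, is already of dimension $n^2$; I would verify that it exhausts the full solution space of the linearized system.

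The main obstacle is performing the infinitesimal computation cleanly for arbitrary $n$: the terminal identity is a fourfold nested bracket, so its expansion generates many summands, and the book-keeping is nontrivial even though the sparsity of $\mu$ kills most of them. As a more conceptual alternative, one can attempt an idempotent-lifting argument: lift each $e_i$ to an idempotent $e_i(\varepsilon)$ of any nearby deformation $\mu_\varepsilon$ (the Jacobian of $x^2 - x = 0$ at $e_i$ is invertible, so the implicit function theorem applies), and then use the terminal identity on $\mu_\varepsilon$ to force the off-diagonal Pierce components $\mu_\varepsilon(e_i(\varepsilon), e_j(\varepsilon))$ with $i \neq j$ to vanish. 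Either route reduces the conjecture to showing that the diagonal idempotent structure of $\mu$ admits no nontrivial infinitesimal deformation compatible with the terminal axiom, which is precisely the statement that its second terminal cohomology is zero.
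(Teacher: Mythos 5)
This statement is explicitly a \emph{conjecture} in the paper: the authors present it as an open problem suggested by the $2$-dimensional geometric classification and give no proof, so there is nothing to compare your argument against except the conjecture itself. What you have written is a strategy, not a proof, and it has concrete gaps.

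First, the entire content of the conjecture is concentrated in the step you defer: showing that the linearization of the terminal identity $[[[\mu+\varepsilon\lambda,a],\mu+\varepsilon\lambda],\mu+\varepsilon\lambda]=0$ at $\mu=\bigoplus{\bf k}e_i$ has solution space of dimension exactly $n^2$. You acknowledge that this computation is not performed; without it the argument establishes nothing. Second, even if you carried it out, the criterion you propose is sufficient but not necessary: the linearized system computes the tangent space of the \emph{scheme} cut out by the terminal identities, which can strictly exceed the dimension of the reduced variety at $\mu$ (obstructed first-order deformations). An orbit can be open while the point is a non-reduced point of the defining scheme, so a computation yielding $\dim T_\mu>n^2$ would be inconclusive rather than a refutation — your approach may fail even if the conjecture is true. (Your preliminary steps are fine: $\mathrm{Aut}(\mu)\cong S_n$ is finite, so $\dim O(\mu)=n^2$.)

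Third, the alternative idempotent-lifting route is flawed as stated. Lifting the $e_i$ to idempotents of a nearby deformation is legitimate (the relevant Jacobian $L_{e_i}+R_{e_i}-\mathrm{id}$ is invertible), but your claim that the terminal identity then forces the off-diagonal products $\mu_\varepsilon(e_i(\varepsilon),e_j(\varepsilon))$, $i\neq j$, to vanish is contradicted already in dimension $2$ by the paper's own classification: $\mathfrak{T}_{10}(\alpha)={\bf E}_5(\alpha)$ is terminal, has a basis of two idempotents, and yet $e_1e_2=(1-\alpha)e_1+\alpha e_2\neq 0$. So the terminal identity together with an idempotent basis does not imply diagonality; any correct argument along these lines must genuinely use the proximity of $\mu_\varepsilon$ to $\mu$, which returns you to the uncarried-out infinitesimal computation.
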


\begin{con}
$\overline{O\big(\nu_n(\alpha)\big)}$ is an irreducible component of the variety of $n$-dimensional terminal algebras.
\end{con}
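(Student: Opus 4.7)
The plan is to establish the conjecture in three stages: irreducibility of $\overline{O(\nu_n(*))}$, a dimension count, and the maximality of this subset inside the variety of $n$-dimensional terminal algebras $\mathfrak{T}_n$ (reading the conjecture's $\overline{O(\nu_n(\alpha))}$ as the closure of the one-parameter family of orbits, in accordance with the convention set for $\mathfrak{T}_{10}(*)$ in the $2$-dimensional case). For irreducibility, I would consider the morphism $\Phi \colon \mathbf{k} \times GL_n \to \mathfrak{T}_n$ given by $(\alpha, g) \mapsto g \cdot \mu_\alpha$, where $\mu_\alpha$ is the structure tensor of $\nu_n(\alpha)$; since $\mathbf{k} \times GL_n$ is irreducible, its image and the Zariski closure $\overline{O(\nu_n(*))}$ are irreducible.

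Second, I would compute $\mathrm{Aut}(\nu_n(\alpha))$ for generic $\alpha$. A direct analysis of the defining relations $e^2=e$, $en_i=\alpha n_i$, $n_i e = (1-\alpha)n_i$, $n_i n_j = 0$ shows that for $\alpha \notin \{0, 1\}$ every automorphism has the form $\phi(e) = e + \sum_i b_i n_i$, $\phi(n_i) = \sum_j d_{ij} n_j$ with $(d_{ij}) \in GL_{n-1}$, yielding $\dim \mathrm{Aut}(\nu_n(\alpha)) = (n-1) + (n-1)^2 = n(n-1)$. Hence $\dim O(\nu_n(\alpha)) = n^2 - n(n-1) = n$, and since $\alpha$ is an isomorphism invariant for generic values, the parameter contributes one additional dimension, so $\dim \overline{O(\nu_n(*))} = n + 1$.

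The decisive step is maximality: one must show that no irreducible closed subset of $\mathfrak{T}_n$ properly contains $\overline{O(\nu_n(*))}$. The cleanest route is to prove that at a generic point $\mu_\alpha$ the Zariski tangent space to $\mathfrak{T}_n$ has dimension exactly $n+1$, spanned by the directions tangent to the $GL_n$-orbit together with $\partial/\partial\alpha$. This amounts to setting up and computing a \emph{terminal cohomology} group $H^2_{\mathrm{Term}}(\nu_n(\alpha),\nu_n(\alpha))$ measuring infinitesimal deformations that preserve identity (\ref{tress}). A complementary strategy, in the spirit of Theorems \ref{geotermpuntos} and \ref{geotermseries}, is to enumerate the terminal algebras $\mathbf{A}$ whose orbit dimension exceeds $n+1$ and to rule out every candidate degeneration $\mathbf{A} \to \nu_n(*)$ by combining Lemma \ref{lemmadeg} with explicit separating sets constructed as in Lemma \ref{lemmaseparating}.

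\textbf{The main obstacle} is precisely this maximality step. Identity (\ref{tress}) is a multilinear polynomial identity of high degree in the structure constants, so the Jacobian analysis at $\mu_\alpha$ is not routine, and there is no off-the-shelf deformation theory for terminal algebras playing the role of Hochschild or Chevalley--Eilenberg cohomology. A successful proof will therefore likely need either a deformation theory tailored to identity (\ref{tress}), or an inductive argument exploiting the semidirect-product structure $\nu_n(\alpha) = \mathbf{k} e \ltimes \langle n_1, \ldots, n_{n-1}\rangle$ to reduce the computation to lower-dimensional terminal algebras, using the $2$-dimensional classification of Theorem \ref{geotermpuntos} as the base case.
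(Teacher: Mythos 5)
This statement is labeled a \emph{conjecture} in the paper: the authors offer no proof, presenting it as an open problem suggested by their $2$-dimensional geometric classification (where $\overline{O(\mathfrak{T}_{10}(*))}$ is indeed a component). Your submission is therefore being compared against nothing, and it must stand on its own as a proof --- which it does not. What you actually establish is routine: irreducibility of $\overline{O(\nu_n(*))}$ as the closure of the image of an irreducible variety, and the count $\dim \mathrm{Aut}(\nu_n(\alpha)) = n(n-1)$, hence $\dim O(\nu_n(\alpha)) = n$ and $\dim \overline{O(\nu_n(*))} = n+1$ (the latter needing the small observation, which you assert but do not verify, that $\alpha$ is recovered from the algebra, e.g.\ as the second eigenvalue of left multiplication by the unique-up-to-translation idempotent, so the orbits are pairwise distinct). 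None of this touches the content of the conjecture, which is precisely the maximality claim: that $\overline{O(\nu_n(*))}$ is not contained in the closure of the orbit of some other terminal algebra or family. You name this as ``the main obstacle'' and leave it entirely open, so the proposal is a research plan, not a proof.

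Moreover, both routes you sketch for maximality face obstructions you should make explicit. The tangent-space route requires not just computing $\dim T_{\mu_\alpha}\mathfrak{T}_n$ but also knowing that $\mu_\alpha$ is a smooth point of $\mathfrak{T}_n$ (or at least that the local dimension equals the tangent-space dimension); identity~(\ref{tress}) is cubic in the structure constants and the scheme $\mathfrak{T}_n$ it cuts out may well be singular along the locus in question, in which case a tangent-space bound of $n+1$ need not follow from, nor imply, the dimension statement you want. The enumeration route via Lemma~\ref{lemmadeg} and separating sets as in Lemma~\ref{lemmaseparating} presupposes an algebraic classification of $n$-dimensional terminal algebras (or at least of all families of orbit dimension exceeding $n+1$), which does not exist for $n>2$; this is exactly why the authors could carry out the analogous argument in Theorems~\ref{geotermpuntos} and~\ref{geotermseries} only for $n=2$. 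Finally, note a small mismatch worth flagging: $\nu_2(\alpha)$ is not literally $\mathfrak{T}_{10}(\alpha)=\mathbf{E}_5(\alpha)$ (the latter has a second idempotent $e_2e_2=e_2$, the former has $n_1n_1=0$), so the $n=2$ case of the conjecture is not simply read off from Figure~2 and would itself need checking under your proposed induction.
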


{\bf Acknowledgment.} The authors would like to thank the referee for his exhaustive  review of the paper as well as his suggestions which have helped to improve the work.

\newpage 
\section{Appendix: Tables}
{\tiny

\begin{center}
$$\begin{array}{|l|l|llll|}

\multicolumn{6}{l}{ \mbox{{\bf Table 1.} 
 Algebraic classification of $2$-dimensional algebras}}\\
\multicolumn{6}{l}{}\\
 \hline

\mbox{Designation} & &  \multicolumn{4}{|c|}{ \mbox{Multiplication table}} \\
\hline
{\bf A}_1(\alpha), \alpha\in{\bf k}  && e_1e_1=e_1+e_2,& e_1e_2=\alpha e_2,& e_2e_1= (1-\alpha) e_2,& e_2e_2=0 \\

\hline
{\bf A}_2  && e_1e_1=e_2,& e_1e_2=e_2,& e_2e_1= -e_2,& e_2e_2=0 \\

\hline
{\bf A}_3  && e_1e_1=e_2, & e_1e_2= 0, & e_2e_1=0, & e_2e_2=0 \\

\hline
{\bf A}_4(\alpha), \alpha\in{\bf k_{\geq 0}}  && e_1e_1= \alpha e_1+  e_2, & e_1e_2= e_1+ \alpha e_2, &  e_2e_1= - e_1, &  e_2e_2=0 \\

\hline
{\bf B}_1(\alpha), \alpha\in{\bf k}  && e_1e_1=0, & e_1e_2=(1-\alpha)e_1 +  e_2, &  e_2e_1=\alpha  e_1 - e_2, &  e_2e_2=0 \\

\hline
{\bf B}_2(\alpha), \alpha\in{\bf k}  && e_1e_1=0, & e_1e_2=(1-\alpha)e_1, &  e_2e_1=\alpha  e_1, &  e_2e_2=0 \\

\hline
{\bf B}_3 & & e_1e_1=0, &  e_1e_2= e_2, & e_2e_1=-e_2, &  e_2e_2=0 \\

\hline
{\bf C}(\alpha,\beta), (\alpha,\beta)\in{\bf k}\times{\bf k_{\geq 0}} & & e_1e_1=e_2, &  e_1e_2= (1-\alpha)e_1+\beta e_2, & e_2e_1=\alpha e_1-\beta e_2, &  e_2e_2=e_2 \\

\hline
{\bf D}_1(\alpha,\beta), (\alpha,\beta)\in \mathcal{U} & & e_1e_1=e_1, &  e_1e_2=(1-\alpha)e_1+ \beta  e_2, & e_2e_1=\alpha  e_1 - \beta   e_2, &  e_2e_2=0 \\

\hline
{\bf D}_{2}(\alpha,\beta), (\alpha,\beta)\in{\bf k}^2\setminus \mathcal{T}  && e_1e_1=e_1, &  e_1e_2=\alpha e_2, & e_2e_1=\beta e_2, &  e_2e_2=0 \\

\hline
{\bf D}_{3}(\alpha,\beta), (\alpha,\beta)\in{\bf k}^2\setminus \mathcal{T} & & e_1e_1=e_1, &  e_1e_2=e_1+ \alpha  e_2, & e_2e_1=- e_1 + \beta  e_2, &  e_2e_2=0 \\

\hline
{\bf E}_1(\alpha,\beta,\gamma,\delta), (\alpha,\beta,\gamma,\delta)\in \mathcal{V} & & e_1e_1=e_1, &  e_1e_2=\alpha e_1+ \beta  e_2, & e_2e_1=\gamma  e_1 + \delta  e_2, &  e_2e_2=e_2 \\

\hline
{\bf E}_2(\alpha,\beta,\gamma),
(\alpha,\beta,\gamma)\in {\bf k}^3\setminus {\bf k}\times\mathcal{T} & & e_1e_1=e_1, &  e_1e_2=(1-\alpha) e_1+ \beta  e_2, & e_2e_1=\alpha  e_1 + \gamma  e_2, &  e_2e_2=e_2 \\

\hline
\Tstrut\Bstrut {\bf E}_3(\alpha,\beta,\gamma),
(\alpha,\beta,\gamma)\in {\bf k}^2\times{\bf k}^*_{>1}
  && e_1e_1=e_1, &  e_1e_2=(1-\alpha)\gamma e_1+ \frac{\beta}{\gamma}  e_2, & e_2e_1=\alpha\gamma  e_1 + \frac{1-\beta}{\gamma}  e_2, &  e_2e_2=e_2 \\

\hline
{\bf E}_4  && e_1e_1=e_1, &  e_1e_2=e_1+ e_2, & e_2e_1=0, &  e_2e_2=e_2 \\

\hline
{\bf E}_5(\alpha), \alpha\in{\bf k} && e_1e_1=e_1, &  e_1e_2=(1-\alpha) e_1+ \alpha  e_2, & e_2e_1=\alpha  e_1 + (1-\alpha)  e_2, &  e_2e_2=e_2 \\
\hline

\hline

\multicolumn{6}{l}{}\\

\multicolumn{6}{l}{ 
\mbox{
 {\bf Table 2.}  Algebraic classification of 2-dimensional rigid algebras}}\\

\multicolumn{6}{l}{}\\

\hline
\mathfrak R_{01}  & {\bf A}_1(1)  & e_1e_1=e_1+e_2,& e_1e_2= e_2,& e_2e_1= 0,& e_2e_2=0  \\
\hline

\mathfrak R_{02}  & {\bf A}_1(2)  & e_1e_1=e_1+e_2,& e_1e_2=2 e_2,& e_2e_1= -e_2,& e_2e_2=0  \\
\hline

\mathfrak R_{03}  & {\bf A}_2  & e_1e_1=e_2,& e_1e_2=e_2,& e_2e_1= -e_2,& e_2e_2=0  \\

\hline
\mathfrak R_{04}  &{\bf A}_3  & e_1e_1=e_2, & e_1e_2= 0, & e_2e_1=0, & e_2e_2=0  \\

\hline
\mathfrak R_{05}(\alpha), \alpha\in{\bf k}  &{\bf B}_2(\alpha)  & e_1e_1=0, & e_1e_2=(1-\alpha)e_1, &  e_2e_1=\alpha  e_1, &  e_2e_2=0 \\

\hline
\mathfrak R_{06}  &{\bf B}_3  & e_1e_1=0, &  e_1e_2= e_2, & e_2e_1=-e_2, &  e_2e_2=0  \\

\hline
\mathfrak R_{07}  & {\bf C}(1,0)  & e_1e_1=e_2, &  e_1e_2= 0, & e_2e_1= e_1, &  e_2e_2=e_2  \\

\hline
\mathfrak R_{08}  &  {\bf D}_1(0,0)  & e_1e_1=e_1, &  e_1e_2=e_1, & e_2e_1=0, &  e_2e_2=0 \\

\hline
\Tstrut\Bstrut\mathfrak R_{09}  &{\bf D}_1(1/2,0)  & e_1e_1=e_1, &  e_1e_2= \frac{1}{2} e_1, & e_2e_1= \frac{1}{2} e_1, &  e_2e_2=0  \\

\hline
\mathfrak R_{10}  & {\bf D}_1(1,1)  & e_1e_1=e_1, &  e_1e_2= e_2, & e_2e_1=  e_1 - e_2, &  e_2e_2=0 \\

\hline
{\mathfrak R}_{11}(\alpha, \beta),
 (\alpha,\beta)\in{\bf k}^2\setminus \mathcal{T}
 
&
{\bf D}_{2}(\alpha,\beta)  & e_1e_1=e_1, &  e_1e_2=\alpha e_2, & e_2e_1=\beta e_2, &  e_2e_2=0 \\

\hline
\mathfrak R_{12}(\alpha),
(\alpha, 1+\alpha, 1+\alpha,\alpha)\in \mathcal{V}
 & {\bf E}_1(\alpha, 1+\alpha, 1+\alpha,\alpha)  & e_1e_1=e_1, &  e_1e_2=\alpha e_1+ (1+\alpha)  e_2, & e_2e_1= (1+\alpha) e_1 + \alpha  e_2, &  e_2e_2=e_2 \\

\hline
\mathfrak R_{13}(\alpha, \beta), (0,\alpha, \beta, 0)\in \mathcal{V} & {\bf E}_1(0,\alpha, \beta, 0)  & e_1e_1=e_1, &  e_1e_2= \alpha e_2, & e_2e_1= \beta  e_1, &  e_2e_2=e_2 \\

\hline
\mathfrak R_{14}(\alpha), \alpha\in{\bf k}^*  & {\bf E}_2(1, 1, \alpha)  & e_1e_1=e_1, &  e_1e_2= e_2, & e_2e_1= e_1 + \alpha  e_2, &  e_2e_2=e_2 \\

\hline
\mathfrak R_{15}(\alpha), \alpha\in{\bf k}\setminus\left\{1\right\}  & {\bf E}_2(1, \alpha, 0)  & e_1e_1=e_1, &  e_1e_2= \alpha  e_2, & e_2e_1= e_1, &  e_2e_2=e_2 \\

\hline
\Tstrut\Bstrut\mathfrak R_{16}(\alpha), \alpha\in{\bf k}^*_{>1}  & {\bf E}_3(1, \alpha, \alpha)  & e_1e_1=e_1, &  e_1e_2= e_2, & e_2e_1=\alpha e_1 + \frac{1-\alpha}{\alpha}  e_2, &  e_2e_2=e_2 \\

\hline
\Tstrut\Bstrut\mathfrak R_{17}(\alpha), \alpha\in{\bf k}^*_{>1}  & {\bf E}_3(1/\alpha,1, \alpha)  & e_1e_1=e_1, &  e_1e_2=(\alpha-1) e_1+ \frac{1}{\alpha}  e_2, & e_2e_1= e_1, &  e_2e_2=e_2 \\

\hline
\Tstrut\Bstrut \mathfrak R_{18}(\alpha), \alpha\in{\bf k}^*_{>1}  & {\bf E}_3(1, 1, \alpha)  & e_1e_1=e_1, &  e_1e_2=\frac{1}{\alpha}  e_2, & e_2e_1= \alpha e_1, &  e_2e_2=e_2 \\

\hline
\mathfrak R_{19}  & {\bf E}_3(0, 0, -1)  & e_1e_1=e_1, &  e_1e_2=-e_1, & e_2e_1=- e_2, &  e_2e_2=e_2 \\

\hline
\mathfrak R_{20}  &{\bf E}_4  & e_1e_1=e_1, &  e_1e_2=e_1+ e_2, & e_2e_1=0, &  e_2e_2=e_2 \\

\hline
\mathfrak R_{21}(\alpha), \alpha\in{\bf k}  &{\bf E}_5(\alpha), \alpha\in{\bf k}  & e_1e_1=e_1, &  e_1e_2=(1-\alpha) e_1+ \alpha  e_2, & e_2e_1=\alpha  e_1 + (1-\alpha)  e_2, &  e_2e_2=e_2 \\
\hline

\hline

\multicolumn{6}{l}{}\\

\multicolumn{6}{l}{ 
\mbox{{\bf Table 3.}  Algebraic classification of 2-dimensional conservative algebras}} \\

\multicolumn{6}{l}{}\\

\hline
\mathfrak C_{01}  &{\bf A}_1(1)  & e_1e_1=e_1+e_2,& e_1e_2= e_2,& e_2e_1= 0,& e_2e_2=0  \\
\hline

\mathfrak C_{02}  &{\bf A}_1(2)  & e_1e_1=e_1+e_2,& e_1e_2=2 e_2,& e_2e_1= -e_2,& e_2e_2=0  \\
\hline

\mathfrak C_{03}  &{\bf A}_2  & e_1e_1=e_2,& e_1e_2=e_2,& e_2e_1= -e_2,& e_2e_2=0  \\

\hline
\mathfrak C_{04}  &{\bf A}_3  & e_1e_1=e_2, & e_1e_2= 0, & e_2e_1=0, & e_2e_2=0  \\

\hline
\mathfrak C_{05}(\alpha), \alpha\in{\bf k}  &{\bf B}_2(\alpha)  & e_1e_1=0, & e_1e_2=(1-\alpha)e_1, &  e_2e_1=\alpha  e_1, &  e_2e_2=0 \\

\hline
\mathfrak C_{06}  &{\bf B}_3  & e_1e_1=0, &  e_1e_2= e_2, & e_2e_1=-e_2, &  e_2e_2=0  \\

\hline
\mathfrak C_{07}  &{\bf C}(1,0)  & e_1e_1=e_2, &  e_1e_2= 0, & e_2e_1= e_1, &  e_2e_2=e_2  \\

\hline
\mathfrak C_{08}  &{\bf D}_1(0,0)  & e_1e_1=e_1, &  e_1e_2=e_1, & e_2e_1=0, &  e_2e_2=0 \\

\hline
\mathfrak C_{09}  &{\bf D}_1(1,1)  & e_1e_1=e_1, &  e_1e_2= e_2, & e_2e_1=  e_1 - e_2, &  e_2e_2=0 \\

\hline
\mathfrak C_{10}(\alpha, \beta), (\alpha, \beta) \in{\bf k}^2 \setminus \mathcal{T}  &{\bf D}_{2}(\alpha,\beta)  & e_1e_1=e_1, &  e_1e_2=\alpha e_2, & e_2e_1=\beta e_2, &  e_2e_2=0 \\

\hline
\mathfrak C_{11}(\alpha, \beta),  (0, \alpha, \beta, 0)\in{\mathcal V}  &{\bf E}_1(0,\alpha, \beta, 0)  & e_1e_1=e_1, &  e_1e_2= \alpha e_2, & e_2e_1= \beta  e_1, &  e_2e_2=e_2 \\

\hline
\mathfrak C_{12}(\alpha),  \alpha\in{\bf k}^*  &{\bf E}_2(1, 1, \alpha)  & e_1e_1=e_1, &  e_1e_2= e_2, & e_2e_1= e_1 + \alpha  e_2, &  e_2e_2=e_2 \\

\hline
\mathfrak C_{13}(\alpha), \alpha\in{\bf k}\setminus \left\{1\right\}  &{\bf E}_2(1, \alpha, 0)  & e_1e_1=e_1, &  e_1e_2= \alpha  e_2, & e_2e_1= e_1, &  e_2e_2=e_2 \\

\hline
\Tstrut\Bstrut\mathfrak C_{14}(\alpha),  \alpha\in{\bf k}^*_{>1}  &{\bf E}_3(1, \alpha, \alpha)  & e_1e_1=e_1, &  e_1e_2= e_2, & e_2e_1=\alpha e_1 + \frac{1-\alpha}{\alpha}  e_2, &  e_2e_2=e_2 \\

\hline
\Tstrut\Bstrut\mathfrak C_{15}(\alpha),  \alpha\in{\bf k}^*_{>1}  &{\bf E}_3(1/\alpha,1, \alpha)  & e_1e_1=e_1, &  e_1e_2=(\alpha-1) e_1+ \frac{1}{\alpha}  e_2, & e_2e_1= e_1, &  e_2e_2=e_2 \\
\hline

\mathfrak C_{16}  &{\bf E}_3(0,0, -1) & e_1e_1=e_1,  &  e_1e_2=-e_1, & e_2e_1=- e_2, &  e_2e_2=e_2 \\

\hline
\mathfrak C_{17}(\alpha), \alpha\in{\bf k}  &{\bf E}_5(\alpha)  & e_1e_1=e_1, &  e_1e_2=(1-\alpha) e_1+ \alpha  e_2, & e_2e_1=\alpha  e_1 + (1-\alpha)  e_2, &  e_2e_2=e_2 \\
\hline

\multicolumn{6}{l}{}\\

\multicolumn{6}{l}{ 
\mbox{{\bf Table 4.}   Algebraic classification of 2-dimensional terminal algebras}}\\

\multicolumn{6}{l}{}\\
\hline
\mathfrak T_{01}  & {\bf A}_1(1)  & e_1e_1=e_1+e_2,& e_1e_2= e_2,& e_2e_1= 0,& e_2e_2=0  \\
\hline

\mathfrak T_{02}   & {\bf A}_1(2)  & e_1e_1=e_1+e_2,& e_1e_2=2 e_2,& e_2e_1= -e_2,& e_2e_2=0  \\
\hline

\mathfrak T_{03}   &{\bf A}_3  & e_1e_1=e_2, & e_1e_2= 0, & e_2e_1=0, & e_2e_2=0  \\

\hline
\mathfrak T_{04}   &{\bf B}_2(1)   & e_1e_1=0, & e_1e_2=0, &  e_2e_1= e_1, &  e_2e_2=0 \\

\hline
\mathfrak T_{05}   &{\bf B}_2(-1)   & e_1e_1=0, & e_1e_2=2 e_1, &  e_2e_1=-e_1, &  e_2e_2=0 \\

\hline
\mathfrak T_{06}   &{\bf B}_3  & e_1e_1=0, &  e_1e_2= e_2, & e_2e_1=-e_2, &  e_2e_2=0  \\

\hline
\mathfrak T_{07}(\alpha), \alpha\in {\bf k}\setminus \left\{1\right\}  & {\bf D}_{2}(\alpha,0)  & e_1e_1=e_1, &  e_1e_2=\alpha e_2, & e_2e_1=0, &  e_2e_2=0 \\

\hline
\mathfrak T_{08}(\alpha), \alpha\in {\bf k}\setminus \left\{2\right\}  & {\bf D}_{2}(\alpha,3-2\alpha\neq0)  & 
e_1e_1=e_1, &  e_1e_2=\alpha e_2, & e_2e_1=(3-2\alpha)  e_2, &  e_2e_2=0 \\

\hline
\mathfrak T_{09}  & {\bf E}_1(0,0,0,0)  & e_1e_1=e_1, &  e_1e_2=0, & e_2e_1=0, &  e_2e_2=e_2 \\

\hline
\mathfrak T_{10}(\alpha), \alpha\in {\bf k}  & {\bf E}_5(\alpha)  & e_1e_1=e_1, &  e_1e_2=(1-\alpha) e_1+ \alpha  e_2, & e_2e_1=\alpha  e_1 + (1-\alpha)  e_2, &  e_2e_2=e_2 \\

\hline
\end{array}$$
\end{center}}

\newpage 
\begin{center}
$$\begin{array}{|rcl|lll|}
\multicolumn{6}{l}{ 
\mbox{{\bf Table 5.}   Degenerations required to prove Theorem \ref{geotermpuntos}}}\\
\multicolumn{6}{l}{} \\

\hline

\multicolumn{3}{|l|}{ 
\mbox{Degeneration}}   &  \multicolumn{3}{|c|}{  {\mbox{Parametrized basis}}}  \\
\hline
\hline
{\mathfrak{T}_{01}} &\to&  {\mathfrak{T}_{03}}    &  E_1^t=te_1, & E_2^t=t^2e_2 & \\
\hline
{\mathfrak{T}_{01}} &\to&  {\mathfrak{T}_{10}}(1)    &  E_1^t=e_1, & E_2^t=e_1+t^{-1}e_2  &\\
\hline
{\mathfrak{T}_{02}} &\to&  {\mathfrak{T}_{03}}    &  E_1^t=te_1, & E_2^t=t^2e_2  &\\
\hline
{\mathfrak{T}_{02}} &\to&  {\mathfrak{T}_{10}}(2)    &  E_1^t=e_1, & E_2^t=e_1+t^{-1}e_2  &\\
\hline
{\mathfrak{T}_{04}}  &\to&  {\mathfrak{T}_{03}}  &  
E_1^t=e_1+te_2, & E_2^t=te_1  &\\
\hline
{\mathfrak{T}_{05}}  &\to&  {\mathfrak{T}_{03}}  &  
E_1^t=e_1+te_2, & E_2^t=te_1  &\\
\hline
{\mathfrak{T}_{07}}(\alpha)   &\to&  {\mathfrak{T}_{03}}  &  
E_1^t=te_1+te_2, & E_2^t=t^2e_1+\alpha t^2e_2&  \\
\hline
{\mathfrak{T}_{08}}(\alpha)   &\to&  {\mathfrak{T}_{03}}  &  
E_1^t=te_1+te_2, & E_2^t=t^2e_1+(3-\alpha)t^2e_2&  \\
\hline
{\mathfrak{T}_{09}}   &\to&  {\mathfrak{T}_{07}}(0)  &  
E_1^t=e_1, & E_2^t=te_2  &\\
\hline
{\mathfrak{T}_{09}}   &\to&  {\mathfrak{T}_{08}}(1)  &  
E_1^t=e_1+e_2, & E_2^t=te_2 & \\
\hline
\multicolumn{6}{l}{} \\

\multicolumn{6}{l}{ 
\mbox{{\bf Table 6.}   Non-degenerations required to prove Theorem \ref{geotermpuntos}}}\\

\multicolumn{6}{l}{} \\

\hline
\multicolumn{3}{|l|}{ 
\mbox{Non-degeneration}}   &  \multicolumn{3}{|c|}{ 
\mbox{Separating set}} \\
\hline
\hline
{\mathfrak{T}_{01}}  &\not\to&  {\mathfrak{T}_{06}}, {\mathfrak{T}_{10}}(\alpha\neq1)  &
\multicolumn{3}{|l|}{\Tstrut\Bstrut \mathcal{R}=\left\{\mu\left|\, c_{22}^1=0, c_{12}^2=c_{11}^1, c_{21}^1=c_{22}^2, c_{12}^1=0, c_{21}^2=0  \right. \right\} }   \\
\hline

{\mathfrak{T}_{02}}  &\not\to&  {\mathfrak{T}_{06}}, {\mathfrak{T}_{10}}(\alpha\neq2)  & 
\multicolumn{3}{|l|}{\Tstrut\Bstrut \mathcal{R}=\left\{\mu\left|\, c_{22}^1=0, c_{11}^1=\dfrac{c_{12}^2}{2}, c_{21}^1=2c_{22}^2, c_{12}^1=-c_{22}^2, c_{21}^2=- \dfrac{c_{12}^2}{2}  \right.\right\}}   \\
\hline

{\mathfrak{T}_{04}}  &\not\to&  {\mathfrak{T}_{06}}  & 
\multicolumn{3}{|l|}{\Tstrut\Bstrut
\mathcal{R}=\left\{\mu\left|\, c_{22}^1=c_{22}^2=c_{12}^1=0, 2 c_{12}^2+c_{21}^2=- c_{11}^1, c_{11}^1 (c_{12}^2+c_{21}^2)= c_{11}^2 c_{21}^1 \right.\right\} }  \\
\hline

{\mathfrak{T}_{05}}  &\not\to&  {\mathfrak{T}_{06}}  & \multicolumn{3}{|l|}{\Tstrut\Bstrut
\mathcal{R}=\left\{\mu\left|\, c_{22}^1=c_{22}^2=0, c_{12}^1=-2 c_{21}^1, c_{21}^2=c_{11}^1, c_{11}^1(c_{11}^1+c_{12}^2)+c_{11}^2 c_{21}^1=0 \right.\right\} }  \\
\hline

{\mathfrak{T}_{07}}(\alpha)  &\not\to&  {\mathfrak{T}_{06}}, {\mathfrak{T}_{10}}(\gamma)  & 
\multicolumn{3}{|l|}{\Tstrut\Bstrut
\mathcal{R}=\left\{\mu\left|\, c_{12}^1=0, c_{12}^2= \alpha c_{11}^1, c_{21}^1=0, c_{21}^2=0, c_{22}^1=0, c_{22}^2=0  \right.\right\} }  \\
\hline

{\mathfrak{T}_{08}}(\alpha)  &\not\to&  {\mathfrak{T}_{06}}, {\mathfrak{T}_{10}}(\gamma)  & 
\multicolumn{3}{|l|}{\Tstrut\Bstrut
\mathcal{R}=\left\{\mu\left|\, c_{12}^1=0, c_{12}^2=\alpha c_{11}^1, c_{21}^1=0, c_{21}^2= (3-2\alpha)c_{11}^1, c_{22}^1=0, c_{22}^2=0 \right.\right\}  } \\
\hline

{\mathfrak{T}_{09}}  &\not\to& 
\begin{array} {l}
{\mathfrak{T}_{04}}, {\mathfrak{T}_{05}}, {\mathfrak{T}_{06}},
{\mathfrak{T}_{10}}(\alpha),\\  {\mathfrak{T}_{07}}(\alpha\neq 0), {\mathfrak{T}_{08}}(\alpha\neq 1)   
\end{array}
& 
\multicolumn{3}{|l|}{\mathcal{R}=\left\{\mu\left|\, c_{22}^1=0, c_{21}^1=0, c_{12}^1=0, c_{12}^2=c_{21}^2, c_{11}^2 c_{22}^2= -c_{21}^2 (c_{11}^1-c_{21}^2) \right.\right\}}  \\
\hline

\multicolumn{6}{l}{} \\

\multicolumn{6}{l}{ 
\mbox{{\bf Table 7.}  Degenerations required to prove Theorem \ref{geotermseries}}}\\
\multicolumn{6}{l}{} \\

\hline
\multicolumn{3}{|l|}{\mbox{Degeneration}}    &   \multicolumn{2}{|c|}{ \mbox{Parametrized basis}}  & \mbox{Parametrized index}  \\
\hline
\hline
{\mathfrak{T}_{07}}(*) &\to& {\mathfrak{T}_{01}}   & 
E_1^t= e_1+e_2, &E_2^t=t e_2  & f(t)= 1+t \Tstrut\Bstrut   \\
\hline
{\mathfrak{T}_{07}}(*) &\to& {\mathfrak{T}_{04}}   & 
E_1^t= e_2, &E_2^t=t e_1  & f(t)= 1+t^{-1} \Tstrut\Bstrut   \\
\hline
{\mathfrak{T}_{08}}(*) &\to& {\mathfrak{T}_{02}}   & 
E_1^t= e_1+e_2,& E_2^t=t e_2  & f(t)= 2-t \Tstrut\Bstrut   \\
\hline
{\mathfrak{T}_{08}}(*) &\to& {\mathfrak{T}_{05}}   & 
E_1^t= e_2, &E_2^t=t e_1  & f(t)= 2-t^{-1} \Tstrut\Bstrut   \\
\hline
{\mathfrak{T}_{10}}(*) &\to& {\mathfrak{T}_{06}}   & 
E_1^t=t e_1, &E_2^t=e_1 - e_2  & f(t)= t^{-1} \Tstrut\Bstrut   \\
\hline

\multicolumn{6}{l}{} \\

\multicolumn{6}{l}{ 
\mbox{{\bf Table 8.}   Non-degenerations required to prove Theorem \ref{geotermseries}}}\\
\multicolumn{6}{l}{} \\

\hline
\multicolumn{3}{|l|}{ 
\mbox{Non-degeneration}}   &  \multicolumn{3}{|c|}{ 
\mbox{Separating set}}   \\
\hline
\hline
{\mathfrak{T}_{07}}(*)  &\not\to&  
\begin{array} {l}
{\mathfrak{T}_{02}}, {\mathfrak{T}_{05}},  {\mathfrak{T}_{08}}(\alpha),\\  {\mathfrak{T}_{06}}, {\mathfrak{T}_{10}}(\alpha\neq 1) 
\end{array}  & \multicolumn{3}{|l|}{ \Tstrut\Bstrut
\mathcal{R}=\left\{\mu\left|\, c_{12}^1=0, c_{21}^1=0, c_{21}^2=0, c_{22}^1=0, c_{22}^2=0 \right.\right\}}  \\
\hline
{\mathfrak{T}_{08}}(*)  &\not\to&  \begin{array} {l}
{\mathfrak{T}_{01}}, {\mathfrak{T}_{04}},  {\mathfrak{T}_{07}}(\alpha\neq 3/2),\\  {\mathfrak{T}_{06}}, {\mathfrak{T}_{10}}(\alpha\neq 2) 
\end{array}  & \multicolumn{3}{|l|}{ \Tstrut\Bstrut
\mathcal{R}=\left\{\mu\left|\, c_{12}^1=0, c_{21}^1=0, c_{21}^2=3 c_{11}^1-2 c_{12}^2, c_{22}^1=0, c_{22}^2=0 \right.\right\}}  \\
\hline
{\mathfrak{T}_{10}}(*)  &\not\to&  \,\,\,{\mathfrak{T}_{03}}   & \multicolumn{3}{|l|}{ \Tstrut\Bstrut
\mathcal{R}=\left\{\mu\left|\, c_{22}^1=0, c_{11}^2=0, c_{22}^2=c_{12}^1+c_{21}^1, c_{11}^1=c_{12}^2+c_{21}^2 \right.\right\}}  \\
\hline

\end{array}$$

\end{center}

\newpage 
\section{Appendix: Figures}

\begin{center}
\label{fig1}
{\bf Figure 1.}
Graph of primary degenerations of the variety of $2$-dimensional terminal algebras.

\

\begin{tikzpicture}[->,>=stealth',shorten >=0.05cm,auto,node distance=1cm,
                    thick,main node/.style={rectangle,draw,fill=gray!10,rounded corners=1.5ex,font=\sffamily \scriptsize \bfseries },rigid node/.style={rectangle,draw,fill=black!20,rounded corners=1.5ex,font=\sffamily \scriptsize \bfseries },style={draw,font=\sffamily \scriptsize \bfseries }]

\node (0)   {$0$};
\node (0aa) [below  of=0]      {};

\node (1) [below  of=0aa]      {$1$};
\node (1aa) [below  of=1]      {};

\node (2) [below  of=1aa]      {$2$};

\node (4) [below  of=2]      {$4$};

\node            (00)  [right of =0]   {};                     
\node            (01)  [right of =00]   {};                     
\node            (02)  [right of =01]   {};                     
\node            (03)  [right of =02]   {};        
\node            (04)  [right of =03]   {};    
\node            (05)  [right of =04]   {};    
\node[main node] (T9)  [right of =05]                       {$\mathfrak{T}_{09}$};

\node            (051)  [right of =T9]   {};
\node            (052)  [right of =051]   {};

\node            (12)  [right of =1]   {};                     
        \node[main node] (T1)  [right of =12]                       {$\mathfrak{T}_{01}$};

\node            (A1q)  [right of =T1]   {};     

    \node[main node] (T2)  [right of =A1q]                       {$\mathfrak{T}_{02}$};
    
\node            (A1qq)  [right of =T2]   {};     

	\node[main node] (T7)  [right of =A1qq]                       {$\mathfrak{T}_{07}(\alpha)$ };
\node            (17)  [right of =T7]   {};                     

	\node[main node] (T8)  [right of =17]                       {$\mathfrak{T}_{08}(\alpha)$ };

\node            (20)  [right of =T8]   {};        

	\node[main node] (T4)  [right of =20]                       {$\mathfrak{T}_{04}$ };
	
\node            (21)  [right of =T4]   {};     

\node[main node] (T5)  [right of =21]                       {$\mathfrak{T}_{05}$ };

\node            (22)  [right of =T5]   {};                     
%\node            (23)  [right of =22]   {};                     

\node            (25)  [right of =2]   {};                     
\node            (25b)  [right of =25]   {};    
	\node[main node] (T10)  [right of =25b]                       {$\mathfrak{T}_{10}(\alpha)$};

\node            (26)  [right of =T10]   {};                     
\node            (26c)  [right of =26]   {};   
\node            (26cc)  [right of =26c]   {};   
	\node[main node] (T3)  [right of =26cc]                       {$\mathfrak{T}_{3}$};
	
\node            (26)  [right of =T3]   {};                     
\node            (26b)  [right of =26]   {};    
\node            (26bb)  [right of =26b]   {};    
	\node[main node] (T6)  [right of =26bb]                       {$\mathfrak{T}_{6}$};
	
\node (q1)[right of=T6]{};
\node (q2)[right of=q1]{};
\node (q21)[right of=q2]{};
\node (q31)[right of=q21]{};

\node (q3)[right of=q31]   {};%$(*) \ \alpha = \frac{\tau}{2}, \ \alpha=\frac{1}{2\tau}$ };

\node (q4)[below  of=q3]   {};%$(**) \ \alpha =\tau_1, \tau_2, \frac{\tau_1+\tau_2-1}{4\tau_1\tau_2-1}$ };

\node (1160)[right of=4]{};
\node (1161)[right of=1160]{};
\node (1162)[right of=1161]{};
\node (1163)[right of=1162]{};
\node (1164)[right of=1163]{};
\node (1165)[right of=1164]{};
	\node[main node] (K2) [ right  of=1165]       {${\bf k}^2$};
 
\path[every node/.style={font=\sffamily\small}]

   (T9)   edge node[above=0, right=-15, fill=white]{\tiny $\alpha=0$ } (T7) 
   (T9)   edge node[above=0, right=-15, fill=white]{\tiny $\alpha=1$ } (T8) 
   
   (T1)   edge  (T3) 
   (T2)   edge  (T3) 
   (T7)   edge  (T3) 
   (T8)   edge  (T3) 
   (T4)   edge  (T3) 
   (T5)   edge  (T3)   
   (T1)   edge node[above=0, right=-15, fill=white]{\tiny $\alpha=1$ } (T10) 
   (T2)   edge node[above=0, right=-15, fill=white]{\tiny $\alpha=2$ } (T10) 
   (T10)   edge  (K2) 
   (T3)   edge  (K2) 
   (T6)   edge  (K2)     ;
  %(A1)   edge node[above=0, right=-15, fill=white]{\tiny $\alpha=1/2$ }    (C212) 
  %(B2)   edge  (A3) 

\end{tikzpicture}

\end{center}

\bigskip

\ 

\ 

\begin{center}
\label{fig2}
{\bf Figure 2}.  Lattice of subsets of the variety of $2$-dimensional terminal algebras.

\

{\tiny
\begin{tikzpicture}[-,draw=gray!50,node distance=0.93cm,
                   ultra thick,main node/.style={rectangle, fill=gray!50,font=\sffamily \scriptsize \bfseries },style={draw,font=\sffamily \scriptsize \bfseries }]

\node (4) {$4$};
\node (4r) [right  of=4] {};
\node (4rr) [right  of=4r] {};
\node (4rrr) [right  of=4rr] {};
\node (3) [right  of=4rrr]      {$3$};
\node (3r) [right  of=3] {};
\node (3rr) [right  of=3r] {};
\node (3rrr) [right  of=3rr] {};
\node (3rrrr) [right  of=3rrr] {};
\node (2) [right  of=3rrrr]      {$2$};
\node (2r) [right  of=2] {};
\node (2rr) [right  of=2r] {};
\node (2rrr) [right  of=2rr] {};
\node (0) [right  of=2rrr]      {$0$};

	\node (C5)  [below of =4]     	{};
	\node[main node] (T7A)  [below of =C5]     	{$\overline{O\big({\mathfrak{T}_{07}}(*)\big)}$};
	\node (T7AR)  [below of =T7A]     	{};
	\node[main node] (T9)  [below of =T7AR]     	{$\overline{O\big({\mathfrak{T}_{09}}\big)}$};
	\node (T8AR)  [below of =T9]     	{};
	\node[main node] (T8A)  [below of =T8AR]     	{$\overline{O\big({\mathfrak{T}_{08}}(*)\big)}$};

	\node[main node] (T70)  [below of =3]                       {$\overline{O\big({\mathfrak{T}_{07}}(0)\big)}$};
	\node[main node] (T1)  [below of =T70]                       {$\overline{O\big({\mathfrak{T}_{01}}\big)}$};
	\node[main node] (T4)  [below of =T1]                       {$\overline{O\big({\mathfrak{T}_{04}}\big)}$};

	\node[main node] (T732)  [below of =T4]                       {$\overline{O\big({\mathfrak{T}_{07}}(3/2)\big)}$};

	\node[main node] (T5)  [below of =T732]                       {$\overline{O\big({\mathfrak{T}_{05}}\big)}$};
	\node[main node] (T2)  [below of =T5]                       {$\overline{O\big({\mathfrak{T}_{02}}\big)}$};
	\node[main node] (T81)  [below of =T2]                       {$\overline{O\big({\mathfrak{T}_{08}}(1)\big)}$};
	\node[main node] (T10A)  [below of =T81]                       {$\overline{O\big({\mathfrak{T}_{10}}(*)\big)}$};

	\node (T101R)  [below of =2] {};
	\node[main node] (T101)  [below of =T101R]                       {$\overline{O\big({\mathfrak{T}_{10}}(1)\big)}$};
	\node (T3R)  [below of =T101]   {};
	\node[main node] (T3)  [below of =T3R]                       {$\overline{O\big({\mathfrak{T}_{03}}\big)}$};	
	\node (T102R)  [below of =T3]{};	
	\node[main node] (T102)  [below of =T102R]                       {$\overline{O\big({\mathfrak{T}_{10}}(2)\big)}$};	
	\node (T6R)  [below of =T102]  {};	
	\node[main node] (T6)  [below of =T6R]                       {$\overline{O\big({\mathfrak{T}_{06}}\big)}$};

\node (0b) [below of =0] {};
\node (0bb) [below of =0b] {};
\node (0bbb) [below of =0bb] {};
\node (0bbbb) [below of =0bbb] {};

	\node[main node] (K2) [ below  of =0bbbb]       {${\bf k}^2$};

%\path[every node/.style={}]

%node [sloped, fill=white, auto=false, allow upside down] {$\subset$}

\path % (C) edge   (C3)

%\draw [very thick, right hook->] (C) -- (A3) ;

  (T7A) edge   (T70)
  (T9) edge   (T70) 
  
  (T7A) edge   (T732)  
  (T7A) edge   (T4)  
  (T7A) edge   (T1)

  (T8A) edge   (T732)  
  (T8A) edge   (T5)  
  (T8A) edge   (T2)
 
  (T8A) edge   (T81)
  (T9) edge   (T81)

  (T10A) edge   (T101)
  (T1) edge   (T101) 

   (T70) edge   (T3)
  (T1) edge   (T3)
  (T4) edge   (T3)
  (T732) edge   (T3)
  (T5) edge   (T3)
  (T2) edge   (T3)
  (T81) edge   (T3) 
  
  (T10A) edge   (T102)
  (T2) edge   (T102)
  
  (T10A) edge   (T6)

 (T101) edge   (K2)
 (T3) edge   (K2)
 (T102) edge   (K2)
 (T6) edge   (K2)

 ;
\end{tikzpicture}}
\end{center}

\newpage

\end{document}